\documentclass[12pt]{article}
\usepackage{a4wide}

\usepackage[a4paper, margin=2.3cm]{geometry}
\usepackage{amsmath,amssymb,amsthm}
\usepackage{color}
\usepackage{parskip}
\usepackage{hyperref}
\usepackage{parskip}
\usepackage{setspace}
%%%%%%%%%%%%%%%%%%%%%%%%%%%%%%%%%%%%%%%%%
\usepackage{graphicx}

%%%%%%%%%% Start TeXmacs macros

\newtheorem{theorem}{Theorem}[section]

\newtheorem{corollary}[theorem]{Corollary}
\newtheorem{lemma}[theorem]{Lemma}

\newtheorem*{definition*}{Definition}

\def\Fp{\mathbb{F}_p}

\begin{document}
\title{Some sum-product type estimates for two-variables over prime fields}

\author{Phuc D Tran \thanks{Department of Mathematics \& Sciences, American University in Bulgaria. Email: pnt170@aubg.edu}\and Nguyen Van The \thanks{VNU University of Science, Vietnam National University, Hanoi. Email: nguyenvanthe\_t61@hus.edu.vn} }

\date{}
\maketitle  
\begin{abstract}
In this paper, we use a recent method given by Rudnev, Shakan, and Shkredov (2018) to improve results on sum-product type problems due to Pham and Mojarrad (2018).
\end{abstract}

\section{Introduction and results}

Let $A$ be a set in $\mathbb{Z}$. We define the sum and product sets as follows: 
$$A+A = \{a+b\colon a, b \in A\},$$
$$A\cdot A=\{ab\colon a,b \in A\}.$$
A celebrated result of Erd\H{o}s and Szemer\'edi \cite{es} states that there is no set $A\subset\mathbb{Z}$ which has both additive and multiplicative structures. More precisely, given any finite set $A \subset \mathbb Z$, we have
$$\max\{ |A+A|, |A \cdot A|\} \gg |A|^{1+\varepsilon}$$
for some positive constant $\varepsilon$. Here, and throughout, $X \gg Y$ means that there exists $C > 0$ such that $CX \leq Y,$ and $X \gtrsim Y$ means $ X \gg \left(\log Y\right)^{-c}Y$ for some absolute constant $c>0.$

Let $\mathbb{F}_q$ be an arbitrary finite field with order $q = p^r$ for some positive integer $r$ and an odd prime $p$. Bourgain, Katz, and Tao \cite{bourgain-katz-tao} showed that given any set $A \subset \mathbb F_{p}$ with $p$ prime and $p^{\delta} < |A| < p^{1-\delta}$ for some $\delta >0,$ one has
$$\max\{ |A+A|, |A \cdot A|\} \geq C_{\delta}|A|^{1+\varepsilon},$$ for some $\varepsilon=\varepsilon(\delta) > 0$. 
Note that the relation between $\varepsilon$ and $\delta$ is difficult to determine.

Suppose that $|A + A| = m$ and $|A \cdot A | = n$, using Fourier analytic methods, Hart, Iosevich, and Solymosi \cite{his} gave an explicit bound over arbitrary finite fields as follows
\begin{equation}\label{eq:his}
|A|^3 \ll \frac{  m^2 n |A| }{ q} +  q^{1/2} mn.
\end{equation}

Note that this bound is only non-trivial when $|A|\gg q^{1/2}$. Using a graph theoretic method, Vinh \cite{vinh} obtained an improvement and as a consequence, derived a stronger bound for large sets, namely,
\begin{equation}\label{eq-x-x}
|A|^2 \leq \frac{mn|A|}{q} + q^{1/2} \sqrt{ m n } .
\end{equation}

The following are direct consequences of (\ref{eq-x-x}).

\begin{enumerate} 
\item If $q^{1/2} \ll |A| < q^{2/3}$, then 
\begin{equation}\label{hh1}
\max \{ |A + A | , |A \cdot A | \} \gg \frac{ |A|^2 }{q^{1/2}} .
\end{equation}
\item If $q^{2/3} \leq |A| \ll q$, then 
\begin{equation}\label{hh2}
\max \{ |A + A | , |A \cdot A | \} \gg ( q |A| )^{1/2} .
\end{equation}
\end{enumerate}
Notice that these bounds were first proved by Garaev \cite{Ga} over prime fields by using exponential sums.

Let $G$ be a subgroup of $\mathbb{F}^{*}$, and an arbitrary function $g:G \rightarrow \mathbb{F}^{*}$. Define $$\mu(g):=\max_{t \in \mathbb{F}^{*}} |\{x \in G: g(x) = t\}|.$$

Hegyv\'{a}ri and Hennecart \cite{heg} studied generalizations of (\ref{hh1}) and (\ref{hh2}) for certain families of polynomials by using methods from spectral graph theory. The precise statements of their results can be stated in two following theorems.

\begin{theorem}[\textbf{Hegyv\'{a}ri and Hennecart}, \cite{heg}]\label{thm1}
Let $G$ be a subgroup of $\mathbb{F}_p^*$. Consider the function $f(x,y)=g(x)(h(x)+y)$ on $G\times \mathbb{F}_p^*$, where $g,h\colon G\to \mathbb{F}_p^*$ are arbitrary functions. Define $m=\mu(g\cdot h)$. For any subsets $A\subset G$ and $B,C\subset \mathbb{F}_p^*$, we have
\[\left\vert f(A,B)\right\vert \left\vert B\cdot C\right\vert\gg \min\left\lbrace\frac{|A||B|^2|C|}{pm^2}, \frac{p|B|}{m}\right\rbrace.\] 
\end{theorem}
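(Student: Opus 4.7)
The plan is to recast the inequality as a weighted point--line incidence count in $\mathbb{F}_p^2$ and apply Vinh's spectral estimate (the expander mixing lemma for the affine point--line bipartite graph), which is the same mechanism that produced \eqref{eq-x-x}.

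First, I would eliminate $b$ from the two relations defining the sets. For any $(a,b,c) \in A \times B \times C$, setting $x = f(a,b) = g(a)(h(a)+b) \in f(A,B)$ and $y = bc \in B \cdot C$ and using $b = y/c$ yields
\[
g(a)\, y \;=\; c\bigl(x - g(a)h(a)\bigr).
\]
Let $T$ denote the number of quadruples $(a,c,x,y) \in A \times C \times f(A,B) \times (B \cdot C)$ satisfying this equation. The map $(a,b,c) \mapsto (a,c,f(a,b),bc)$ injects $A \times B \times C$ into the solution set (since $b$ is recovered as $y/c$), giving the lower bound $T \ge |A|\,|B|\,|C|$.

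Second, I would view $T$ as a weighted incidence count in $\mathbb{F}_p^2$. For each fixed $(a,c) \in A \times C$ the equation cuts out a line $\ell_{a,c}$ of slope $g(a)/c$ and $x$-intercept $g(a)h(a)$, so $T$ is the total weight of incidences between the point set $f(A,B) \times (B \cdot C)$ and the family $\{\ell_{a,c}\}_{(a,c) \in A \times C}$. Here the hypothesis $m = \mu(g\cdot h)$ enters: two parameters $(a,c),(a',c')$ induce the same line only when $g(a)h(a) = g(a')h(a')$ and $g(a)/c = g(a')/c'$; the first condition confines $a'$ to a fiber of $g\cdot h$ of size at most $m$, and the second then determines $c'$ uniquely. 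Hence each line carries multiplicity $w(\ell) \le m$, while $\sum_\ell w(\ell) = |A|\,|C|$, so $\|w\|_1 = |A|\,|C|$ and $\|w\|_2^2 \le m\,|A|\,|C|$.

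Third, the main step is the weighted Vinh/expander-mixing bound for the affine point--line bipartite graph, whose second singular value is $\sqrt{p}$: for non-negative weights $u,w$,
\[
\Bigl|\,\sum_{(p,\ell) \in E} u(p)\,w(\ell) - \tfrac{1}{p}\,\|u\|_1\,\|w\|_1\,\Bigr| \;\le\; \sqrt{p}\,\|u\|_2\,\|w\|_2.
\]
Taking $u = \mathbf{1}_{f(A,B) \times (B \cdot C)}$ (so $\|u\|_1 = \|u\|_2^2 = |f(A,B)|\,|B \cdot C|$) and $w$ as above, this upper bound for $T$ combined with $T \ge |A|\,|B|\,|C|$ forces one of the two terms of the Vinh estimate to dominate. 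The "main-term" case yields $|f(A,B)|\,|B \cdot C| \gg p|B|/m$, and the "error-term" case, after squaring, yields $|f(A,B)|\,|B \cdot C| \gg |A|\,|B|^2\,|C|/(pm^2)$; the stated minimum is exactly the worst of these two branches.

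The main obstacle, and the reason $\mu(g\cdot h)$ is the natural quantity (rather than $\mu(g)$ or $\mu(h)$ separately), is the careful bookkeeping of multiplicities in the line parameterization, so that the powers of $m$ in the two branches of the $\min$ come out honestly from $\|w\|_\infty \le m$ and $\|w\|_2^2 \le m\|w\|_1$. A secondary subtlety is verifying that the multiplicative--additive shape $f(x,y) = g(x)(h(x)+y)$ really does produce a linear constraint in the target variables $(x,y)$ after clearing $b$, which is what makes the Vinh framework directly applicable.
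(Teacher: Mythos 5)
The paper itself gives no proof of Theorem \ref{thm1}; it is quoted from Hegyv\'ari--Hennecart \cite{heg}, whose original argument is spectral (eigenvalue bounds for certain sum--product type graphs). Your proof is correct and lives in the same spectral circle of ideas, but via a cleaner device: you encode each triple $(a,b,c)$ as an incidence between the grid $f(A,B)\times (B\cdot C)$ and the multiset of lines coming from $(a,c)$, and then invoke the weighted expander-mixing form of Vinh's point--line bound. The bookkeeping checks out: the line attached to $(a,c)$ is $y=(c/g(a))x - c\,h(a)$ (you wrote slope $g(a)/c$, a harmless slip that does not affect the multiplicity count), a repeated line forces $g(a)h(a)=g(a')h(a')$ and then determines $c'$, so $\|w\|_\infty\le m$, $\|w\|_1=|A||C|$, $\|w\|_2^2\le m|A||C|$, while the injection $(a,b,c)\mapsto (a,c,f(a,b),bc)$ gives $T\ge |A||B||C|$. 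Plugging into $\bigl|T-\tfrac1p\|u\|_1\|w\|_1\bigr|\le \sqrt{p}\,\|u\|_2\|w\|_2$ yields $|f(A,B)|\,|B\cdot C|\gg \min\bigl\{p|B|,\ |A||B|^2|C|/(pm)\bigr\}$, which is in fact slightly stronger than the stated bound: your error branch loses only one power of $m$ and the main-term branch none, so the claimed exponents $m^2$ and $m$ follow a fortiori. The one ingredient to spell out in a complete write-up is the weighted mixing inequality itself, which is not literally the statement in \cite{vinh} but follows from the same singular value computation (the point--line biadjacency matrix over $\mathbb{F}_p^2$ has second singular value $\sqrt{p}$), applied to the weight vectors rather than indicators; compared with the graph-theoretic proof in \cite{heg}, your route is shorter and makes the role of $\mu(g\cdot h)$ transparent as a line-multiplicity bound.
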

\bigskip
\begin{theorem}[\textbf{Hegyv\'{a}ri and Hennecart}, \cite{heg}]\label{2.2}
Let $G$ be a subgroup of $\mathbb{F}_p^*$. Consider the function $f(x,y)=g(x)(h(x)+y)$ on $G\times \mathbb{F}_p^*$, where $g,h\colon G\to \mathbb{F}_p^*$ are arbitrary functions. Define $m=\mu(g)$. For any subsets $A\subset G$, $B,C \subset \mathbb{F}_p^*$, we have
\[|f(A,B)||B+C|\gg \min\left\lbrace  \frac{|A||B|^2|C|}{pm^2}, \frac{p|B|}{m}\right\rbrace.\]
\end{theorem}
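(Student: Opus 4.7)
The strategy is to apply Cauchy-Schwarz to reduce the inequality to an energy estimate, and then to bound that energy via Rudnev's point-plane incidence theorem in the spirit of the Rudnev-Shakan-Shkredov (2018) method. For each $(z,s) \in \mathbb{F}_p \times \mathbb{F}_p$ let
\[
r(z,s) = |\{(x,y,c) \in A \times B \times C : f(x,y) = z,\ y + c = s\}|.
\]
Since $\sum_{z,s} r(z,s) = |A||B||C|$ and $r$ is supported on $f(A,B) \times (B+C)$, Cauchy-Schwarz gives
\[
|A|^2|B|^2|C|^2 \leq |f(A,B)| \cdot |B + C| \cdot Q, \qquad Q := \sum_{z,s} r(z,s)^2,
\]
where $Q$ counts sextuples $(x_1,y_1,c_1,x_2,y_2,c_2) \in (A \times B \times C)^2$ with $f(x_1,y_1) = f(x_2,y_2)$ and $y_1 + c_1 = y_2 + c_2$. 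Thus the theorem reduces to the estimate $Q \ll pm^2|A||C| + m|A|^2|B||C|^2/p$, which after dividing yields the claimed min on the right-hand side.

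I would split $Q = Q_1 + Q_2$ according to whether $g(x_1) = g(x_2)$. For the diagonal part, cancelling $g(x_1) = g(x_2)$ reduces $f(x_1,y_1) = f(x_2,y_2)$ to $y_1 - y_2 = h(x_2) - h(x_1)$, and combined with $y_1 + c_1 = y_2 + c_2$ this also forces $c_2 - c_1 = h(x_2) - h(x_1)$. The number of pairs $(x_1,x_2) \in A^2$ with $g(x_1) = g(x_2)$ is at most $m|A|$ by definition of $\mu(g)$, and for each such pair the constraints on the differences contribute at most $|B|$ choices of $(y_1,y_2)$ and $|C|$ of $(c_1,c_2)$; hence $Q_1 \leq m|A||B||C| \leq pm^2|A||C|$ using $|B| \leq p$. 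For $Q_2$, substituting $y_2 = y_1 + c_1 - c_2$ into $f(x_1,y_1) = f(x_2,y_2)$ gives
\[
(g(x_1) - g(x_2))\,y_1 - g(x_2)\,c_1 + g(x_2)\,c_2 = g(x_2) h(x_2) - g(x_1) h(x_1),
\]
which for each $(x_1,x_2) \in A^2$ with $g(x_1) \neq g(x_2)$ defines a plane $\pi_{x_1,x_2}$ in $\mathbb{F}_p^3$ with coordinates $(y_1,c_1,c_2)$. Thus $Q_2$ is bounded by the number of incidences between the point set $P = B \times C \times C$ and the (multi)set of planes $\{\pi_{x_1,x_2}\}$; Rudnev's point-plane incidence theorem (with $|P| = |B||C|^2$ and at most $|A|^2$ planes) should then produce the target estimate $Q_2 \ll m|A|^2|B||C|^2/p$ after the lower-order terms are absorbed.

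The principal obstacle is the third step: applying Rudnev's theorem requires (i) a bound on the maximum multiplicity of the planes, that is, on the number of pairs $(x_1,x_2)$ producing proportional coefficient vectors $(g(x_1) - g(x_2),\ -g(x_2),\ g(x_2),\ g(x_2)h(x_2) - g(x_1)h(x_1))$, which is delicate in degenerate configurations (e.g., when $h$ is near-constant, many pairs can share the same plane); (ii) a bound on the collinearity parameter, namely the maximum number of points of $B \times C \times C$ on any line of $\mathbb{F}_p^3$; and (iii) a verification that the secondary terms arising from Rudnev's bound are dominated by either $pm^2|A||C|$ or $m|A|^2|B||C|^2/p$. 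The multiplicity of planes is the most subtle point: one likely needs to refine the energy decomposition (for instance, grouping pairs according to the value of $g(x_1)/g(x_2)$) or invoke a weighted version of the incidence bound so that $\mu(g) = m$ enters linearly rather than quadratically in the final estimate.
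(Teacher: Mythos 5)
The paper does not prove this statement at all: Theorem \ref{2.2} is quoted from Hegyv\'ari--Hennecart \cite{heg}, where it is established by spectral graph theory (expander-mixing/exponential sum estimates), and that is the natural tool because the bound has a main term with a $1/p$ and an ``error'' term $p|B|/m$ coming from square-root cancellation. So your attempt is necessarily a different route, and the question is whether it can work.

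Your opening reduction is fine: Cauchy--Schwarz correctly shows it suffices to prove $Q \ll pm^2|A||C| + m|A|^2|B||C|^2/p$, and your diagonal bound $Q_1 \le m|A||B||C| \le pm^2|A||C|$ is correct. The genuine gap is the claim that Rudnev's point--plane theorem ``should then produce'' $Q_2 \ll m|A|^2|B||C|^2/p$. It cannot, and not merely because of the multiplicity/collinearity issues you flag: the incidence bound in Theorem \ref{theo4} has no term of the shape $|\mathcal{R}||\mathcal{S}|/p$; its smallest possible output is $|\mathcal{R}|^{1/2}|\mathcal{S}|$ (or the dual $|\mathcal{S}|^{1/2}|\mathcal{R}|$), and $|\mathcal{R}|^{1/2}|\mathcal{S}| \le |\mathcal{R}||\mathcal{S}|/p$ would force $|\mathcal{R}| \ge p^2$, exactly the regime excluded by the hypothesis $|\mathcal{R}| \ll p^2$. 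Concretely, take $m=1$ and $|A|=|B|=|C|=p^{3/5}$: your target is $Q \ll \max\{p^{11/5}, p^{2}\} = p^{11/5}$, while with points $B\times C\times C$ (size $p^{9/5}$) and about $|A|^2=p^{6/5}$ planes, even the ideal incidence main term is $\min\{(|B||C|^2)^{1/2}|A|^2,\ |A|\,|B||C|^2\} = p^{12/5}$, already too big before any multiplicity or collinearity losses; at $|A|=|B|=|C|=p^{2/3}$ the shortfall persists and $|\mathcal{R}|$ is hitting the $p^2$ barrier. So no refinement of items (i)--(iii) can rescue the plan across the stated range of set sizes: the Rudnev machinery is the small-set tool (it is what Pham--Mojarrad and the present paper use to \emph{improve on} Theorem \ref{2.2} when $|A|,|B|,|C|$ are small), whereas the $\min\{|A||B|^2|C|/(pm^2),\, p|B|/m\}$ bound is a large-set statement whose $1/p$ main term requires the Fourier/spectral argument of \cite{heg} (or Vinh-type mixing), not a point--plane incidence count.
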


In \cite{PhMo}, Pham and Mojarrad used a point-plane incidence bound due to Rudnev \cite{Ru} to improve Theorems \ref{thm1} and \ref{2.2} when sets $A$, $B$, and $C$ are not too big. 

\begin{theorem}[\textbf{Pham-Mojarrad}, \cite{PhMo}]\label{thm1*}
Let $f(x,y)=g(x)(h(x)+y)$ be a function defined on $\mathbb{F}_p^*\times \mathbb{F}_p^*$, where $g,h\colon \mathbb{F}_p^*\to \mathbb{F}_p^*$ are arbitrary functions. Define $m=\mu(g\cdot h)$. For any subsets $A, B,C\subset \mathbb{F}_p^*$ with $|A|, |B|, |C|\le p^{5/8}$, we have
\[\max\left\lbrace |f(A, B)|, |B\cdot C|\right\rbrace\gg \min\left\lbrace\frac{|A|^{\frac{1}{5}}|B|^{\frac{4}{5}}|C|^{\frac{1}{5}}}{m^{\frac{4}{5}}}, \frac{|B||C|^{\frac{1}{2}}}{m}, \frac{|B||A|^{\frac{1}{2}}}{m}, \frac{|B|^{\frac{2}{3}}|C|^{\frac{1}{3}}|A|^{\frac{1}{3}}}{m^{\frac{2}{3}}}\right\rbrace.\]
\end{theorem}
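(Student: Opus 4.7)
The plan is to apply the Rudnev-Shakan-Shkredov template: estimate a Cauchy-Schwarz collision count for $f$ via Rudnev's point-plane incidence bound, using the set $C$ as the genuine third coordinate that lifts a 2-dimensional energy into a 3-dimensional incidence problem. Set $\phi := g\cdot h$, $N := |f(A,B)|$, $M := |B\cdot C|$, and $r(s) := |\{(a,b)\in A\times B : f(a,b) = s\}|$. Cauchy-Schwarz yields $E := \sum_s r(s)^2 \ge |A|^2|B|^2/N$, and $E$ counts the quadruples $(a_1,b_1,a_2,b_2) \in A^2 \times B^2$ satisfying
$$g(a_1)b_1 + \phi(a_1) \;=\; g(a_2)b_2 + \phi(a_2). \qquad (\star)$$

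Multiplying $(\star)$ by an arbitrary $c \in C$ converts it into the plane equation
$$g(a_1)(b_1 c) \,-\, g(a_2)(b_2 c) \,-\, \bigl(\phi(a_2) - \phi(a_1)\bigr)c \;=\; 0,$$
in the variables $(X,Y,Z) = (b_1c, b_2c, c) \in (B\cdot C)\times(B\cdot C)\times C$. Letting $P := (B\cdot C)\times(B\cdot C)\times C$ and taking $\Pi$ to be the multiset of planes $\pi_{a_1,a_2}$ indexed by $(a_1,a_2) \in A \times A$ (with coincidences of planes controlled via $\mu(\phi) = m$), each solution of $(\star)$ paired with each $c \in C$ produces one incidence, whence
$$I(P,\Pi) \;\geq\; |C| \cdot E \;\geq\; \frac{|A|^2|B|^2|C|}{N}.$$

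The hypothesis $|A|,|B|,|C| \le p^{5/8}$ keeps $|\Pi| \le |A|^2 \le p^{5/4} \ll p^2$ inside Rudnev's range and ensures that his ``trivial'' term $|P||\Pi|/p$ is absorbed. Rudnev's theorem then gives, schematically,
$$I(P,\Pi) \;\ll\; (|P||\Pi|)^{3/4} \,+\, k\cdot\max(|P|,|\Pi|),$$
where $k$ is the maximum collinearity in $P$ (bounded by a quantity proportional to $M$, $|C|$, or $|A|^{1/2}$ depending on the type of line under consideration). Substituting $|P| = M^2|C|$, $|\Pi| = |A|^2$, and $K := \max\{N,M\}$, then splitting into cases according to (i) which of the two terms of Rudnev's bound dominates and (ii) which direction supplies the bound on $k$, produces the four candidate lower bounds
$$T_1 = \frac{|A|^{1/5}|B|^{4/5}|C|^{1/5}}{m^{4/5}},\quad T_2 = \frac{|B||C|^{1/2}}{m},\quad T_3 = \frac{|B||A|^{1/2}}{m},\quad T_4 = \frac{|B|^{2/3}|C|^{1/3}|A|^{1/3}}{m^{2/3}},$$
the first from the main Rudnev term, the middle two from the error term aligned with each of the natural geometric directions, and the fourth from the balanced regime. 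Taking the minimum delivers the stated inequality.

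\textbf{Main obstacle.} The delicate point is bookkeeping of $m = \mu(g \cdot h)$: the hypothesis controls only the level sets of $\phi$, yet the planes $\pi_{a_1,a_2}$ depend on the triple $\bigl(g(a_1), g(a_2), \phi(a_2) - \phi(a_1)\bigr)$, so $m$ must be propagated both into the multiplicity estimate for coincident planes and into the collinearity parameter $k$ in Rudnev's error term. The varying $m$-exponents $\tfrac{4}{5},1,1,\tfrac{2}{3}$ in the four candidates reflect exactly how many $m$-factors each case inherits. A secondary (purely arithmetic) task is checking, in each of the four cases, that the $p^{5/8}$ hypothesis suffices to keep the trivial term $|P||\Pi|/p$ out of the way.
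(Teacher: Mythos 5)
Your proposal cannot work as written, and the key step fails for a structural reason. (Note also that the paper you are being compared against does not prove this statement at all: Theorem \ref{thm1*} is quoted from Pham--Mojarrad \cite{PhMo}, so the only sensible comparison is with their argument.) The fatal point is the lift itself. After you multiply the collision equation $(\star)$ by $c$, every plane $g(a_1)X-g(a_2)Y-(\phi(a_2)-\phi(a_1))Z=0$ is homogeneous, i.e.\ passes through the origin, and the point $(b_1c,b_2c,c)$ lies on it if and only if $(\star)$ holds --- the factor $c$ divides out, so the incidence relation does not depend on $c$. Consequently the factor $|C|$ you gain in $I(P,\Pi)\ge |C|\cdot E$ is illusory: for fixed $(b_1,b_2)$ the points $(b_1c,b_2c,c)$, $c\in C$, are $|C|$ collinear points on a line through the origin, and a plane through the origin either contains that whole line or meets it in no nonzero point. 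Hence the collinearity parameter in Theorem \ref{theo4} satisfies $k\ge |C|$, and the error term $k|\mathcal{S}|$ eats the gained factor: if it dominates you get only $N\gg |B|^2/m^2$, with no dependence on $C$ or on $B\cdot C$ whatsoever. In effect your three-dimensional problem is a two-dimensional one (points $(b_1,b_2)$, lines indexed by $(a_1,a_2)$) inflated by a dummy coordinate, so no bound involving $|B\cdot C|$ can be extracted from it; even taking your main term $(|P||\Pi|)^{3/4}$ at face value and setting $N=M=K$ one only reaches $K\gg |A|^{1/5}|B|^{4/5}|C|^{1/10}$ (up to $m$-powers), short of the claimed $T_1$.

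There is a second, independent gap: the multiplicity of your planes is not controlled by $m=\mu(g\cdot h)$. The plane $\pi_{a_1,a_2}$ is determined only up to scalar by the triple $(g(a_1),g(a_2),\phi(a_2)-\phi(a_1))$; taking $g\equiv 1$ (so $\phi=h$) the plane depends only on the difference $h(a_2)-h(a_1)$, which may have as many as $|A|$ representations even when $\mu(\phi)=1$, so the passage from the multiset of planes to a set loses an uncontrolled factor. The actual route of \cite{PhMo} avoids both problems: one substitutes $b=tc^{-1}$ with $t\in B\cdot C$, so that each triple $(a,b,c)\in A\times B\times C$ gives a solution of $s=g(a)h(a)+g(a)\,t\,c^{-1}$ with $s\in f(A,B)$, $t\in B\cdot C$, $c\in C$, yielding at least $|A||B||C|$ solutions; after Cauchy--Schwarz the resulting energy is bounded by point--plane incidences in which $c$ genuinely enters the coordinates and $g(a)h(a)$ appears as a non-homogeneous coefficient, so that a plane pins down the value of $g(a)h(a)$ (and $g(a)$) and its multiplicity is at most $m$; the four terms of the minimum then arise from the case analysis of the collinearity term in Rudnev's bound together with the $p^{5/8}$ hypothesis --- the part of the argument your sketch asserts ``schematically'' but never carries out.
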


\begin{theorem}[\textbf{Pham-Mojarrad}, \cite{PhMo}]\label{thm2*}
Let $f(x,y)=g(x)(h(x)+y)$ be a function defined on $\mathbb{F}_p^*\times \mathbb{F}_p^*$, where $g, h\colon \mathbb{F}_p^*\to \mathbb{F}_p^*$ are arbitrary functions. Define $m=\mu(g)$. For any subsets $A, B,C\subset \mathbb{F}_p^*$ with $|A|, |B|, |C|\le p^{5/8}$, we have
\[\max\left\lbrace |f(A, B)|, |B+ C|\right\rbrace\gg \min\left\lbrace\frac{|A|^{\frac{1}{5}}|B|^{\frac{4}{5}}|C|^{\frac{1}{5}}}{m^{\frac{4}{5}}}, \frac{|B||C|^{\frac{1}{2}}}{m}, \frac{|B||A|^{\frac{1}{2}}}{m}, \frac{|B|^{\frac{2}{3}}|C|^{\frac{1}{3}}|A|^{\frac{1}{3}}}{m^{\frac{2}{3}}}\right\rbrace.\]
\end{theorem}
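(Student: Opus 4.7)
Following Pham and Mojarrad \cite{PhMo}, the plan is to estimate in two ways the quantity
\[
M\;:=\;\bigl|\bigl\{(a,a',b,b',c,c')\in A^{2}\times B^{2}\times C^{2}\;:\;g(a)(h(a)+b)=g(a')(h(a')+b'),\ b+c=b'+c'\bigr\}\bigr|.
\]
Write $T:=|f(A,B)|$ and $S:=|B+C|$. For the lower bound, consider the map $F:A\times B\times C\to \mathbb{F}_p^{2}$ defined by $F(a,b,c)=(f(a,b),\,b+c)$; its image lies in $f(A,B)\times(B+C)$ and so has cardinality at most $TS$. Since $M$ equals the number of ordered pairs $(x,y)\in(A\times B\times C)^{2}$ with $F(x)=F(y)$, the Cauchy--Schwarz inequality gives
\[
M\;\geq\;\frac{(|A||B||C|)^{2}}{T\cdot S}.
\]

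For the upper bound, observe that $M$ is at most the number of $5$-tuples $(a,a',b,c,c')\in A^{2}\times B\times C^{2}$ satisfying the equation obtained by substituting $b'=b+c-c'$ into the first constraint, namely
\[
g(a)\,b+g(a)h(a)\;=\;g(a')\,b+g(a')\bigl(h(a')+c-c'\bigr).
\]
Setting $(\xi,\eta,\zeta):=\bigl(g(a)\,b,\,b,\,g(a)h(a)\bigr)\in\mathbb{F}_p^{3}$ converts this into the linear relation
\[
\xi-g(a')\,\eta+\zeta\;=\;g(a')\bigl(h(a')+c-c'\bigr),
\]
so each $(a,b)\in A\times B$ gives a point $p(a,b)\in\mathbb{F}_p^{3}$, each $(a',c,c')\in A\times C\times C$ gives a plane, and $M\leq I(P,\Pi)$ with $|P|\leq|A||B|$ and $|\Pi|\leq|A||C|^{2}$. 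The hypothesis $|A|,|B|,|C|\leq p^{5/8}$ forces $|P|,|\Pi|\ll p^{2}$, which is exactly the regime needed for Rudnev's point--plane incidence theorem \cite{Ru}, bounding $I(P,\Pi)$ by the sum of a main term of order $\sqrt{|P|}\,|\Pi|$ and a collinear-points correction of order $k\cdot\max(|P|,|\Pi|)$, where $k$ is the maximum number of points of $P$ on any line.

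Combining the lower and upper bounds on $M$ and solving for $\max(T,S)$ will produce the desired estimate. The four alternatives in the statement arise from balancing across (i) the two regimes of Rudnev's bound (the main term versus the $k$-correction) and (ii) the two possible choices of which set---$B$ or $C$---is assigned to the point coordinates versus to the plane parameters. The main obstacle is the careful estimation of the collinear-points constant $k$: since $(\xi,\eta,\zeta)=(g(a)b,b,g(a)h(a))$, multiple points of $P$ on a single line constrain the joint distribution of $g(a)$ and $h(a)$, and the hypothesis $\mu(g)=m$ ensures each level-set of $g$ has size at most $m$. A case analysis distinguishing lines whose $\eta$-coordinate is fixed from lines varying in $\eta$ yields $k\ll m\cdot\max(|A|,|B|)$, and tracking this factor through each of the four balancing regimes produces the exponents $m^{4/5},\,m^{2/3},\,m$ in the final bounds. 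The threshold $p^{5/8}$ is exactly what ensures Rudnev's main term improves on the trivial incidence bound.
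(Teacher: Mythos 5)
This theorem is not proved in the present paper at all --- it is quoted from Pham--Mojarrad \cite{PhMo} --- so your attempt can only be judged against the standard Rudnev-based argument, of which the closest internal model is Lemma \ref{lemmasum}. Your overall strategy (double count, Cauchy--Schwarz lower bound, Rudnev upper bound, with $p^{5/8}$ guaranteeing the $p^2$ condition) is in the right spirit, but the central step is not valid as written. The inequality $M\leq I(P,\Pi)$ is false in general: both your points and your planes are parametrized with multiplicity. A point only remembers $(g(a)b,\,b,\,g(a)h(a))$, so up to $m$ values of $a$ collapse to it; much worse, the plane attached to $(a',c,c')$ depends only on $\bigl(g(a'),\,g(a')(h(a')+c-c')\bigr)$, i.e.\ only on $g(a')$, $g(a')h(a')$ and the difference $c-c'$, so up to $m\cdot r_{C-C}(c-c')$ (as large as $m|C|$) triples give the same plane. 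Theorem \ref{theo4} applies to a \emph{set} of distinct planes, so to recover the count $M$ you must bound a weighted incidence sum; handling the $r_{C-C}$ weight is exactly the hard part (it requires a dyadic/popularity decomposition in $r_{C-C}$, or an extra Cauchy--Schwarz producing energy-type quantities, as in Lemma \ref{lemmasum} and in \cite{PhMo}), and it is entirely absent from your sketch. Patching it crudely by multiplying by the maximal multiplicities $m$ and $m|C|$ loses a full factor of $|C|$ and yields a bound of the shape $\max\{T,S\}\gg |A|^{1/4}|B|^{3/4}/(m|C|^{1/2})$, which does not recover any of the four terms in the statement.

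A second structural error: you claim the factors $m^{4/5}, m^{2/3}, m$ in the conclusion come from the collinearity parameter $k$ in Rudnev's bound, via $k\ll m\max(|A|,|B|)$. For your point set the maximal number of collinear points is at most $\max\{|A|,|B|\}$ with no factor of $m$ (a line picking up two points with the same $a$ must be of the form $X=g(a)Y$, $Z=g(a)h(a)$, and then carries at most $|B|$ distinct points; otherwise it meets each $a$ in at most one point). In the genuine arguments the dependence on $m$ enters through the point/plane multiplicities, as a factor $m^2$ multiplying the incidence count (compare $\mathcal{E}_1\leq m^2\, I(\mathcal{R}_1,\mathcal{S}_1)$ in the proof of Lemma \ref{lemmasum}); since the main term $|\mathcal{R}|^{1/2}|\mathcal{S}|$ of Theorem \ref{theo4} contains no $k$ at all, your proposed mechanism cannot produce an $m$ in every branch of the minimum. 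Finally, the last paragraph (``balancing across the two regimes and the two choices of which set is assigned to points'') is asserted rather than carried out; the statement is not symmetric in $B$ and $C$, and the terms $|B||A|^{1/2}/m$, $|B||C|^{1/2}/m$, $|B|^{2/3}|C|^{1/3}|A|^{1/3}/m^{2/3}$ arise from a case analysis of the $k$-term and of the size constraints that you have not performed. As it stands the proposal is an outline with the decisive steps missing or incorrect.
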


It follows from Theorems \ref{thm1*} and \ref{thm2*} that for $A\subset \mathbb{F}_p$ if $|A|\le p^{5/8}$, then we have 
\[\max\{|A+A|, |A\cdot A|\}\gg |A|^{6/5}.\]
It is worth noting that the best current bound in the literature is due to Rudnev, Shakan, and Shkredov \cite{RuSh1}. More precisely, they proved that for $A\subset \mathbb{F}_p$ with $|A|\le p^{18/35}$, we have 
\[\max\{|A+A|, |A\cdot A|\}\gg |A|^{11/9-o(1)}.\]
The main purpose of this paper is to employ the method of Rudnev, Shakan, and Shkredov in \cite{RuSh1} to improve further Theorems \ref{thm1*} and \ref{thm2*}. Our first main result is as follows. 

\begin{theorem} \label{maintheorem2}
Let $f_1(x,y)=g_1(x)(h_1(x)+y), f_2(x,y)=g_2(x)(h_2(x)+y)$ be the functions defined on $\mathbb{F}_p^{*} \times \mathbb{F}_p^{*}$, where $g_1,h_1,g_2,h_2: \mathbb{F}_p^{*} \rightarrow \mathbb{F}_p^{*}$ are arbitrary functions. Define $m= \max \{\mu(g_1),\mu(g_2)\}$. For any subsets $A,B,C,D \subset \mathbb{F}_p^{*}$, with $|A| \leq |B|\leq p^{3/5}$ and $|D| \leq |C| \leq p^{3/5}$, we have
$$\max \left\{|f_1(A,B)|,|f_2(D,C)|,|B-C|\right\} \gtrsim
\frac{|B|^{\frac{23}{36}}|C|^{\frac{13}{36}}|A|^{\frac{7}{36}}|D|^{\frac{1}{36}}}{m^{\frac{8}{9}}}.$$
\end{theorem}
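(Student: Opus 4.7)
The plan is to adapt the Rudnev--Shakan--Shkredov framework from \cite{RuSh1} to the present four-set, two-function setting. Let $N_1 = |f_1(A,B)|$, $N_2 = |f_2(D,C)|$, $N_3 = |B-C|$, and $M = \max\{N_1, N_2, N_3\}$. I introduce the representation function
\[
r(u,v,w) = \bigl|\{(a,b,d,c) \in A \times B \times D \times C : f_1(a,b) = u,\ f_2(d,c) = v,\ b - c = w\}\bigr|,
\]
supported in $f_1(A,B) \times f_2(D,C) \times (B-C)$, and apply Cauchy--Schwarz to $\sum r = |A||B||C||D|$ to obtain
\[
\bigl(|A||B||C||D|\bigr)^{2} \;\leq\; M^{3} \cdot E,
\]
where the triple-collision energy $E := \sum_{u,v,w} r(u,v,w)^{2}$ counts octuples $(a_i,b_i,d_i,c_i)_{i=1,2}$ satisfying $f_1(a_1,b_1) = f_1(a_2,b_2)$, $f_2(d_1,c_1) = f_2(d_2,c_2)$, and $b_1 - c_1 = b_2 - c_2$.

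Next I would recast $E$ as an incidence count. Eliminating $b_2 = b_1 - c_1 + c_2$ from the third collision, the remaining two conditions become
\[
(g_1(a_1) - g_1(a_2))\,b_1 + g_1(a_2)(c_1 - c_2) = g_1(a_2)h_1(a_2) - g_1(a_1)h_1(a_1),
\]
\[
g_2(d_1)\,c_1 - g_2(d_2)\,c_2 = g_2(d_2)h_2(d_2) - g_2(d_1)h_2(d_1),
\]
where the point $(b_1, c_1, c_2)$ ranges over $B \times C \times C$ and the coefficients are indexed by $(a_1,a_2) \in A^{2}$ and $(d_1,d_2) \in D^{2}$. Pairing these equations to define a plane in $\mathbb{F}_p^{3}$ parametrized by $(a_1, a_2, d_1, d_2)$, the quantity $E$ becomes a point-plane incidence count. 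Applying Rudnev's point-plane theorem from \cite{Ru}, with the collinearity parameter controlled by $m = \max\{\mu(g_1), \mu(g_2)\}$ (non-injectivity of $g_i$ forces coincidence of the plane coefficients), yields an upper bound on $E$ of the form $m^{\alpha}|A|^{\beta_{1}}|B|^{\beta_{2}}|C|^{\beta_{3}}|D|^{\beta_{4}}$ plus a lower-order error, the error being dominated under the hypothesis $|A|,|B|,|C|,|D| \leq p^{3/5}$.

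To reach the sharper exponents $\tfrac{23}{36}, \tfrac{13}{36}, \tfrac{7}{36}, \tfrac{1}{36}$ of the theorem, I follow the RSS refinement: dyadically decompose $r(u,v,w)$ by its magnitude, apply the point-plane estimate on each level set of the representation function, and recombine via H\"older. Substituting the resulting bound for $E$ back into the Cauchy--Schwarz inequality from the first step produces the claimed lower bound on $M$. The main obstacle is the simultaneous handling of two independent sources of planes (one per function $f_i$), which forces an asymmetric choice of ``point'' coordinates versus ``plane'' parameters once $b_2$ is eliminated; this asymmetry is precisely what produces the unbalanced exponents on $|A|,|B|,|C|,|D|$. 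A secondary difficulty is tracking how the collinearity hypothesis in Rudnev's theorem propagates through the dyadic decomposition so as to pick up the factor $m^{8/9}$ rather than a naive $m^{1}$ or $m^{2}$.
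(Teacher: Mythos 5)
Your opening reduction already cannot deliver the stated exponents. From $(|A||B||C||D|)^2 \le M^3 E$ you would need $E \lesssim m^{8/3}|A|^{17/12}|B|^{1/12}|C|^{11/12}|D|^{23/12}$, but the diagonal octuples ($a_1=a_2$, $b_1=b_2$, $d_1=d_2$, $c_1=c_2$) force $E \ge |A||B||C||D|$, and this trivial lower bound already exceeds the required upper bound in regimes allowed by the theorem (e.g.\ $m=O(1)$, $|A|=|D|\approx |B|^{1/2}$, $|C|=|B|$, where you would need $|B| \lesssim |B|^{2/3}$). So no incidence bound, however cleverly applied, can rescue the plan as set up: the sets $A$ and $D$ must not enter the main counting directly, but only through auxiliary energy estimates. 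There is also a structural error in the incidence step itself: after eliminating $b_2$, the two remaining collision conditions are two independent linear equations in the point $(b_1,c_1,c_2)\in\mathbb{F}_p^3$; their conjunction defines a line (codimension two), not a plane, so Rudnev's point--plane theorem does not apply to the pair of equations "paired into a plane." Finally, the phrase "dyadically decompose and recombine via H\"older" is where all the actual work would have to happen, and nothing is specified there; a single Cauchy--Schwarz plus one incidence application is essentially the Pham--Mojarrad argument and tops out near the $6/5$-type exponent, whereas the $\tfrac{23}{36},\tfrac{13}{36},\tfrac{7}{36},\tfrac{1}{36}$ exponents require genuinely more.

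The paper's proof is built differently, following Rudnev--Shakan--Shkredov. One restricts to popular differences $P\subset B-C$ with $r_{B-C}(x)\ge |B||C|/(2|B-C|)$, counts solutions of $b-c=(a-c)-(a-b)=(d-c)-(d-b)$ with $a,d\in B$ (note: not in $A$ or $D$), and quotients by the equivalence $(a,b,c,d)\sim(a+t,b+t,c+t,d+t)$ for $t\in(B-B)\cap(C-C)$. Cauchy--Schwarz over equivalence classes splits the count into two pieces: the second moment of class sizes, bounded via H\"older by $E_4(B)^{3/4}E_4(C)^{1/4}$, and the number of classes $\mathcal{X}$, bounded through a further popularity/dyadic argument involving $E_4(B,T)$ and $E^{+}(B,B-C)$. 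Only at this stage does Rudnev's point--plane theorem enter, through Lemma \ref{lemmasum} and its Remark, which convert each fourth-order energy into powers of $|f_1(A,B)|$, $|f_2(D,C)|$ and $m$; this is how $A$, $D$ and the factor $m^{8/9}$ appear. If you want to salvage your write-up, you would need to replace your triple-collision energy $E$ by this popular-difference/shift-class scheme and prove (or invoke) the $E_4$ bounds of Lemma \ref{lemmasum}; the route you describe cannot be patched to give the theorem as stated.
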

\begin{theorem}\label{maintheorem3}  Let $f_1(x,y)=g_1(x)(h_1(x)+y), f_2(x,y)=g_2(x)(h_2(x)+y)$ be the functions defined on $\mathbb{F}_p^{*} \times \mathbb{F}_p^{*}$, where $g_1,h_1,g_2,h_2: \mathbb{F}_p^{*} \rightarrow \mathbb{F}_p^{*}$ are arbitrary functions. Define $m=\mu(g_1)=\mu(g_2)$. For any subsets $A,B,C,D \subset \mathbb{F}_p^{*}$, with $|A|\leq |B|\leq p^{3/5},|D|\leq |C| \leq p^{3/5}$, we have
$$\max\left\{|f_1(A,B)|,|f_2(D,C)|,|B+C|\right\} \gtrsim \frac{|C|^{\frac{5}{18}}|B|^{\frac{13}{18}}|A|^{\frac{1}{6}}|D|^{\frac{1}{18}}}{m^{\frac{8}{9}}}.$$
\end{theorem}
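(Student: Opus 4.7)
The plan is to adapt the higher-moment energy technique of Rudnev, Shakan, and Shkredov \cite{RuSh1}, essentially mirroring the argument for Theorem~\ref{maintheorem2} with the additive combination $B+C$ in place of $B-C$. Set
$$K := \max\{|f_1(A,B)|,\, |f_2(D,C)|,\, |B+C|\},$$
and introduce the representation functions $r_1(v)=\#\{(a,b)\in A\times B:f_1(a,b)=v\}$, $r_2(w)=\#\{(d,c)\in D\times C:f_2(d,c)=w\}$, and $r_+(s)=\#\{(b,c)\in B\times C:b+c=s\}$, together with the coupled collision count
$$T := \#\bigl\{(a,b,a',b',d,c,d',c') : f_1(a,b)=f_1(a',b'),\; f_2(d,c)=f_2(d',c'),\; b+c=b'+c'\bigr\}.$$

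For the lower bound on $T$, write $|A||B||D||C|=\sum_{v,w,s}N(v,w,s)$, where $N(v,w,s)$ is the number of quadruples $(a,b,d,c)\in A\times B\times D\times C$ with $f_1(a,b)=v$, $f_2(d,c)=w$, $b+c=s$. Since the number of achievable triples $(v,w,s)$ is at most $|f_1(A,B)|\cdot|f_2(D,C)|\cdot|B+C|\leq K^3$, Cauchy--Schwarz yields
$$T=\sum_{v,w,s}N(v,w,s)^2 \;\gtrsim\; \frac{(|A||B||C||D|)^2}{K^3}.$$

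For the upper bound on $T$, the collision $f_1(a,b)=f_1(a',b')$ rearranges to the linear relation $g_1(a)\,b-g_1(a')\,b'=g_1(a')h_1(a')-g_1(a)h_1(a)$, which is naturally parametrised by the points $(g_1(a),g_1(a)h_1(a))\in\mathbb{F}_p^2$; similarly for $f_2$. Using the constraint $b+c=b'+c'$ to eliminate one of the four variables $b,b',c,c'$ and combining with the rewritten $f_2$-collision recasts $T$ as a count of incidences between a point set and a plane set in $\mathbb{F}_p^3$, whose cardinalities lie below $p^2$ by the hypothesis $|B|,|C|\leq p^{3/5}$. Rudnev's point-plane bound \cite{Ru} is then applied, with $m=\mu(g_1)=\mu(g_2)$ controlling the maximum line population inside any plane. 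The asymmetry hypotheses $|A|\leq|B|$ and $|D|\leq|C|$ are used to assign the larger sets to the plane-indexing side of the incidence configuration. Equating the two bounds on $T$ and solving for $K$ produces the stated inequality, the exponents $13/18,\,5/18,\,1/6,\,1/18$ and $8/9$ emerging from balancing the Rudnev estimate against the Cauchy--Schwarz losses on $r_1$, $r_2$, $r_+$.

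The main technical obstacle is the incidence bookkeeping. One must isolate the degenerate contribution from pairs with $g_1(a)=g_1(a')$ or $g_2(d)=g_2(d')$, which is controlled directly by $m$, from the generic contribution to which Rudnev's theorem applies cleanly; the correct exponent of $m$ must then be tracked through both regimes. This parallels the computation in \cite{RuSh1}, with one extra coupling variable coming from the sumset constraint $b+c=b'+c'$ replacing the role of the multiplicative constraint in the $B\cdot C$ analogue of Theorem~\ref{maintheorem2}.
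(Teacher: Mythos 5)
Your plan is not the paper's argument, and it has a structural flaw that prevents it from reaching the stated exponents. First, the recasting step does not work: after eliminating, say, $c'=b+c-b'$, you are left with \emph{two} simultaneous collision equations (the $f_1$-collision and the $f_2$-collision) in the remaining seven variables, whereas a point--plane incidence encodes a \emph{single} linear relation; equivalently, $T=\sum_{(v,w,s)}N(v,w,s)^2$ is a coincidence (energy) count for the map $(a,b,d,c)\mapsto(f_1(a,b),f_2(d,c),b+c)\in\mathbb{F}_p^3$, and there is no way to build two of the three constraints into the parametrisation of points and planes so that one application of Rudnev's theorem bounds $T$. One can treat one collision at a time -- but that is exactly the single Cauchy--Schwarz-plus-incidence scheme of Pham and Mojarrad, which is known to stop at the $6/5$-type exponents this paper is improving. (Also, in the paper $m$ enters as a multiplicity factor $\mathcal{E}\leq m^2 I(\mathcal{R},\mathcal{S})$, not as the collinearity parameter $k$ in Rudnev's bound, and the proof of Theorem \ref{maintheorem2} that you say you are mirroring is itself a popular-difference/higher-energy argument, not a coupled collision count.)

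Second, and decisively, even an \emph{optimal} upper bound on $T$ cannot close your argument. The diagonal $a=a',b=b',c=c',d=d'$ already gives $T\geq|A||B||C||D|$, so the strongest conclusion obtainable from $(|A||B||C||D|)^2/K^3\lesssim T$ is $K^3\gtrsim|A||B||C||D|$. The theorem asserts $K^3\gtrsim m^{-8/3}|B|^{13/6}|C|^{5/6}|A|^{1/2}|D|^{1/6}$, which exceeds $|A||B||C||D|$ whenever $|D|$ is small relative to the other sets (for instance $m=1$, $|A|=|B|=|C|=n$, $|D|\leq n^{3/5-\varepsilon}$), so in that regime your scheme cannot yield the claimed bound no matter how the incidence bookkeeping is arranged; even in the symmetric case you would need $T\lesssim m^{8/3}|A|^{13/3}$, a mere factor $|A|^{1/3}$ above the diagonal, which no single incidence application of this shape delivers. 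The paper instead runs the Rudnev--Shakan--Shkredov machinery: popular sums $P(C)$, a refined set $C'$, an $E_{4/3}$-popular set $P'\subset C'-C'$, an equivalence-class decomposition of the solutions of $-c+b=(a+b)-(a+c)=(d+b)-(d+c)$, Cauchy--Schwarz down to $E_4(B)$, $E_4(C)$ together with dyadically popular sets $T\subset B+C-C$ and $T_1\subset C-C$, and repeated applications of Lemma \ref{lemmasum} (this is where Rudnev's theorem actually enters), culminating in $|B|^{13/4}|C|^{5/4}|A|^{3/4}|D|^{1/4}\lesssim m^4|B+C|^2|f_1(A,B)|^2|f_2(D,C)|^{1/2}$, from which the exponents follow. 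Your proposal is missing all of this higher-energy structure, and the missing structure is precisely what separates the claimed exponents from the weaker single-application bounds.
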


The following are consequences of theorems \ref{maintheorem2} and \ref{maintheorem3}. Firstly, when $A=B=C=D$ and $f_1 \equiv f_2$, we have: 
\begin{corollary}\label{cor1} Let $f(x,y)=g(x)(h(x)+y)$ be a function defined on $\mathbb{F}_p^{*} \times \mathbb{F}_p^{*}$, where $g,h: \mathbb{F}_p^{*} \rightarrow \mathbb{F}_p^{*}$ are arbitrary functions. Define $m=\mu(g)$. For any subset $A \subset \mathbb{F}_p^{*}$, with $|A| \leq p^{3/5}$, we have
$$\max\{|f(A,A)|,|A \pm A|\} \gg |A|^{\frac{11}{9}-o(1)}.$$
\end{corollary}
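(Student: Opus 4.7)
The corollary is expected to follow by direct specialization of Theorems \ref{maintheorem2} and \ref{maintheorem3} to the diagonal case $A = B = C = D$; the entire argument is essentially a substitution followed by an exponent check, with no additional ingredients required.

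I would first apply Theorem \ref{maintheorem2} with $A = B = C = D$ and $f_1 = f_2 = f$, so that $g_1 = g_2 = g$ and hence $m := \max\{\mu(g_1),\mu(g_2)\} = \mu(g)$. Both hypotheses $|B|, |C| \le p^{3/5}$ reduce to $|A| \le p^{3/5}$, and the left-hand side of the theorem collapses to $\max\{|f(A,A)|, |A - A|\}$. The right-hand side simplifies as
$$\frac{|A|^{23/36}\,|A|^{13/36}\,|A|^{7/36}\,|A|^{1/36}}{m^{8/9}} = \frac{|A|^{(23+13+7+1)/36}}{m^{8/9}} = \frac{|A|^{11/9}}{m^{8/9}},$$
yielding $\max\{|f(A,A)|, |A - A|\} \gtrsim |A|^{11/9}/m^{8/9}$. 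Under the natural assumption $m = O(1)$ (for instance, $g(x) = x$, which recovers the Rudnev--Shakan--Shkredov sum-product bound), this delivers the claimed $|A|^{11/9 - o(1)}$ estimate on the ``$-$'' side, the $-o(1)$ absorbing the logarithmic loss in $\gtrsim$.

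Then I would repeat the same substitution in Theorem \ref{maintheorem3}. The hypothesis $\mu(g_1) = \mu(g_2)$ is automatic, and the numerator exponents now sum as $\tfrac{5}{18} + \tfrac{13}{18} + \tfrac{1}{6} + \tfrac{1}{18} = \tfrac{22}{18} = \tfrac{11}{9}$, so by the same manipulation $\max\{|f(A,A)|, |A + A|\} \gtrsim |A|^{11/9}/m^{8/9}$. Combining the ``$+$'' and ``$-$'' bounds gives the unified statement for $|A \pm A|$. The main thing to verify is arithmetic: one must check that the four weighted exponents in each parent theorem sum to exactly $11/9$ on the diagonal, since any miscount would be visible precisely here; beyond that, there is no conceptual obstacle.
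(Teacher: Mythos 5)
Your proposal is correct and is exactly how the paper obtains the corollary: it is stated as a direct consequence of Theorems \ref{maintheorem2} and \ref{maintheorem3} with $A=B=C=D$ and $f_1\equiv f_2$, the four exponents summing to $11/9$ in each case and the logarithmic losses absorbed into the $o(1)$. Your remark that one must treat $m=\mu(g)$ as $O(1)$ (or absorb it into the $o(1)$) to drop the $m^{8/9}$ factor is a fair reading of the paper's implicit convention, since the stated bound omits $m$ even though $m$ is defined in the hypothesis.
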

Moreover, when $ A=D, f_1 \equiv f_2$, we improved a result of Pham and Mojarrad in $\cite{PhMo}$.
\begin{corollary} \label{cor2}
Let $f(x,y)=g(x)(h(x)+y)$ be a function defined on $\mathbb{F}_p^{*} \times \mathbb{F}_p^{*}$, where $g,h: \mathbb{F}_p^{*} \rightarrow \mathbb{F}_p^{*}$ are arbitrary functions. Define $m=\mu(g)$. For any subsets $A,B,C \subset \mathbb{F}_p^{*}$, with $|A|\leq |B|,|C| \leq p^{3/5}$, we have
$$\frac{|B|^{13/18}|C|^{5/18}|A|^{2/9}}{m^{\frac{8}{9}}} \lesssim \max \{|f(A,B)|,|B + C|\},$$
and 
$$\frac{|B|^{23/36}|C|^{13/36}|A|^{2/9}}{m^{\frac{8}{9}}} \lesssim \max \{|f(A,B)|,|B - C|\}.$$
 \end{corollary}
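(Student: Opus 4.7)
The plan is to derive both inequalities as immediate specializations of the main results, Theorems \ref{maintheorem2} and \ref{maintheorem3}, by taking $D := A$, $f_1 := f_2 := f$, and $g_1 := g_2 := g$. With this choice, $\mu(g_1) = \mu(g_2) = \mu(g) = m$, so the hypothesis $m = \mu(g_1) = \mu(g_2)$ of Theorem \ref{maintheorem3} (and $m = \max\{\mu(g_1),\mu(g_2)\}$ of Theorem \ref{maintheorem2}) is satisfied. The paired size constraints ``$|A| \leq |B| \leq p^{3/5}$ and $|D| \leq |C| \leq p^{3/5}$'' reduce exactly to the corollary's hypothesis that $|A|$ is at most both $|B|$ and $|C|$, with $|B|,|C| \leq p^{3/5}$.

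For the difference-set bound, Theorem \ref{maintheorem2} now yields
$$\max\{|f(A,B)|, |f(A,C)|, |B-C|\} \gtrsim \frac{|B|^{23/36}|C|^{13/36}|A|^{7/36}|A|^{1/36}}{m^{8/9}} = \frac{|B|^{23/36}|C|^{13/36}|A|^{2/9}}{m^{8/9}},$$
where the elementary arithmetic $7/36 + 1/36 = 8/36 = 2/9$ collapses the two $|A|$ factors into a single one. Analogously, Theorem \ref{maintheorem3} gives
$$\max\{|f(A,B)|, |f(A,C)|, |B+C|\} \gtrsim \frac{|B|^{13/18}|C|^{5/18}|A|^{1/6}|A|^{1/18}}{m^{8/9}} = \frac{|B|^{13/18}|C|^{5/18}|A|^{2/9}}{m^{8/9}},$$
using $1/6 + 1/18 = 3/18 + 1/18 = 4/18 = 2/9$. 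These are precisely the right-hand sides asserted in the corollary.

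The one bookkeeping subtlety is that each main theorem delivers a three-term maximum involving the auxiliary quantity $|f(A,C)|$, which the corollary suppresses. Since $|f(A,C)|$ has exactly the same functional shape as $|f(A,B)|$, the corollary's two-term maximum should be read as a schematic simplification in which this symmetric partner is absorbed; more concretely, one may run each specialization a second time with $B$ and $C$ exchanged (the hypothesis $|A| \leq |B|,|C|$ is symmetric in $B,C$, and $|B \pm C|$ has the same cardinality as $|C \pm B|$) and retain the stronger of the two resulting exponent patterns. I do not anticipate any genuine analytic obstacle: Theorems \ref{maintheorem2} and \ref{maintheorem3} carry the entire analytic content, and the corollary is nothing more than a parameter specialization combined with the exponent arithmetic above.
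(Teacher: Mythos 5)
Your derivation is essentially the paper's own proof: Corollary \ref{cor2} is obtained there exactly by setting $D=A$ and $f_1\equiv f_2$ in Theorems \ref{maintheorem2} and \ref{maintheorem3}, and your exponent arithmetic $7/36+1/36=2/9$ and $1/6+1/18=2/9$ is the entire content of that step, so the specialization itself is correct. The only caveat is the point you flag yourself: the specialization bounds the three-term maximum $\max\{|f(A,B)|,|f(A,C)|,|B\pm C|\}$, and your proposed remedy --- rerunning the theorem with $B$ and $C$ exchanged and keeping the stronger exponent pattern --- does not actually eliminate $|f(A,C)|$, since the swapped application again bounds a three-term maximum containing $|f(A,C)|$ (now with exponents $13/18$ on $|C|$ and $5/18$ on $|B|$); in general a lower bound on a three-term maximum does not yield the same lower bound on the two-term maximum, because $|f(A,C)|$ could be the largest of the three. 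This discrepancy, however, lies in the paper's statement of the corollary rather than in your argument: the paper supplies no mechanism for discarding $|f(A,C)|$ either, its proof being precisely the one-line specialization you carried out, so the corollary as literally printed should be read as including (or implicitly absorbing) the symmetric term $|f(A,C)|$.
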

In the following theorem, we provide the multiplicative version of Theorem \ref{maintheorem2}.
\begin{theorem}\label{maintheorem4} Let $f_1(x,y)=g_1(x)(h_1(x)+y), f_2(x,y)=g_2(x)(h_2(x)+y)$ be the functions defined on $\mathbb{F}_p^{*} \times \mathbb{F}_p^{*}$, where $g_1,h_1,g_2,h_2: \mathbb{F}_p^{*} \rightarrow \mathbb{F}_p^{*}$ are arbitrary functions. Define $m= \max \{\mu(g_1 \cdot h_1),\mu(g_2 \cdot h_2)  \}$. For any subsets $A,B,C,D \subset \mathbb{F}_p^{*}$, with $|A|\leq |B|\leq p^{3/5},|D|\leq |C| \leq p^{3/5}$, we have
$$\max\left\{|f_1(A,B)|,|f_2(D,C)|,|B \cdot C|\right\} \gtrsim \frac{|C|^{\frac{5}{18}}|B|^{\frac{13}{18}}|A|^{\frac{1}{6}}|D|^{\frac{1}{18}}}{m^{\frac{8}{9}}}.$$
\end{theorem}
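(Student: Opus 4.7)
The strategy parallels that of Theorem~\ref{maintheorem3}, with multiplicative energy replacing additive energy throughout. Expanding $f_i(x,y) = g_i(x)h_i(x) + g_i(x)y$ gives, for every $(a,d)\in A\times D$, the algebraic identity
\[
bc \;=\; \frac{\bigl(f_1(a,b) - g_1(a)h_1(a)\bigr)\bigl(f_2(d,c) - g_2(d)h_2(d)\bigr)}{g_1(a)\,g_2(d)},
\]
which plays here the role of the representation formula $b+c = \bigl[f_1(a,b)+f_2(d,c)-g_1(a)h_1(a)-g_2(d)h_2(d)\bigr]/g_1(a)$ (valid whenever $g_1(a)=g_2(d)$) used in the additive case. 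Setting $E^{\times}(B,C) = \#\{(b_1,b_2,c_1,c_2)\in B^{2}\times C^{2}: b_1 c_1 = b_2 c_2\}$, Cauchy-Schwarz gives $|B|^{2}|C|^{2} \le |B\cdot C|\cdot E^{\times}(B,C)$, so the theorem reduces to an upper bound on the multiplicative energy in terms of $|f_1(A,B)|$, $|f_2(D,C)|$, $|A|$, $|D|$, and $m$.

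For any quadruple with $b_1 c_1 = b_2 c_2$ and any $(a,d)\in A\times D$, write $X_i = f_1(a,b_i)$, $Y_i = f_2(d,c_i)$, $P = (g_1 h_1)(a)$, $Q = (g_2 h_2)(d)$. Clearing denominators in the identity, the collision $b_1c_1=b_2c_2$ is equivalent to $(X_1-P)(Y_1-Q)=(X_2-P)(Y_2-Q)$, which expands to
\[
X_1 Y_1 - X_2 Y_2 \;=\; P\,(Y_1 - Y_2) \;+\; Q\,(X_1 - X_2).
\]
This is a point-plane incidence in $\mathbb{F}_p^{3}$: the planes, indexed by pairs $(P,Q)$, are parameterised by $A\times D$, while the points $(X_1 Y_1 - X_2 Y_2,\,Y_1 - Y_2,\,X_1 - X_2)$ are generated from the collisions together with the choice of $(a,d)$. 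The hypothesis $\mu(g_i h_i)\le m$ forces at most $m^{2}$ pairs $(a,d)$ to produce the same plane, so there are at least $|A||D|/m^{2}$ distinct planes; the points, in turn, live in a set of size controlled by $|f_1(A,B)|^{2}|f_2(D,C)|^{2}$.

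I would then invoke the Rudnev point-plane incidence theorem in the form exploited by Rudnev, Shakan, and Shkredov~\cite{RuSh1}, whose hypotheses are met in the regime $|B|,|C|\le p^{3/5}$. A dyadic pigeonholing over the representation function of $B\cdot C$ (standard in this framework) together with the incidence bound produces an estimate on $E^{\times}(B,C)$ of exactly the same structural form as the one driving Theorem~\ref{maintheorem3}. Substituting back into the Cauchy-Schwarz inequality above and balancing against $|f_1(A,B)|$ and $|f_2(D,C)|$ yields the claimed lower bound, with the same exponents as in Theorem~\ref{maintheorem3} because the underlying incidence estimate is structurally identical.

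\textbf{Main obstacle.} The delicate point is the multiplicity bookkeeping in the incidence setup: one must control simultaneously the number of distinct planes (via $\mu(g_i h_i)$), the collinearity parameter required by Rudnev's theorem, and the size of the range from which the point coordinates are drawn (via $|f_1(A,B)|$ and $|f_2(D,C)|$). The fact that $m=\max\{\mu(g_1 h_1),\mu(g_2 h_2)\}$ rather than $\max\{\mu(g_1),\mu(g_2)\}$ is the relevant parameter here is explained by this setup: the planes are now labelled by the \emph{constant terms} $g_ih_i$ of the affine maps $y\mapsto f_i(\cdot,y)$, whereas in Theorem~\ref{maintheorem3} they were labelled by the slopes $g_i$. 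Once this accounting is carried out cleanly, the exponent $8/9$ for $m$ and the fractional exponents in $|A|,|B|,|C|,|D|$ emerge by the same optimisation as in the additive case.
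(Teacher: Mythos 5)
There is a genuine gap here, and it is structural rather than cosmetic. Your plan reduces the theorem, via Cauchy--Schwarz, to an upper bound on the \emph{second-moment} multiplicative energy $E^{\times}(B,C)$ obtained from a single application of Rudnev's point-plane theorem. That scheme is exactly the Pham--Mojarrad scheme and it cannot produce the claimed exponents: in the symmetric case $A=B=C=D$, $f_1=f_2$, a bound of the shape $E^{\times}(B,C)\lesssim m^{2}|f(A,B)|^{3/2}|C|^{3/2}|A|^{-1/2}$ (the best one can hope for from one incidence application) fed into $|B|^{2}|C|^{2}\le |B\cdot C|\,E^{\times}(B,C)$ gives $\max\{|f(A,A)|,|A\cdot A|\}\gtrsim |A|^{6/5}m^{-4/5}$, i.e.\ the exponent $6/5$ of Theorems \ref{thm1*}--\ref{thm2*}, not the $11/9$-type exponent $\frac{13}{18}+\frac{5}{18}+\frac{1}{6}+\frac{1}{18}=\frac{11}{9}$ asserted in Theorem \ref{maintheorem4}. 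The paper's proof is genuinely different: it reruns the entire Rudnev--Shakan--Shkredov argument of Theorem \ref{maintheorem3} with products/quotients in place of sums/differences --- popular product sets $P$, the refined set $C'$, the set $P'$ popular for $E_{4/3}(C')$, the equivalence-class (shift-invariance) argument, two nested dyadic pigeonholings, and crucially the \emph{fourth-order} energies $E_4^{\times}$ controlled by Lemma \ref{lemmapro} (this is also where $m=\max\{\mu(g_1h_1),\mu(g_2h_2)\}$ enters). Your sentence that pigeonholing ``produces an estimate on $E^{\times}(B,C)$ of exactly the same structural form as the one driving Theorem \ref{maintheorem3}'' is an assertion, not an argument, and it misidentifies what drives that theorem: it is not a second-moment bound at all.

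Independently of the exponent issue, the incidence setup you describe is not sound as stated. Taking the point set to be the set of \emph{values} $(X_1Y_1-X_2Y_2,\,Y_1-Y_2,\,X_1-X_2)$ destroys the bookkeeping: a fixed point $(u,v,w)$ arises from a whole line's worth of quadruples $(X_1,X_2,Y_1,Y_2)$ (solve $X_2v+Y_2w=u-vw$), so the passage from the solution count $|A||D|E^{\times}(B,C)$ to $m^{2}\,I(\mathcal{R},\mathcal{S})$ does not follow; this is precisely why the paper's Lemmas \ref{lemmasum} and \ref{lemmapro} index the points by tuples $(a',c',x')$ rather than by values. Moreover your point set has size up to $|f_1(A,B)|^{2}|f_2(D,C)|^{2}$, which can violate both $|\mathcal{R}|\ll|\mathcal{S}|$ and $|\mathcal{R}|\ll p^{2}$ in the admissible range $|B|,|C|\le p^{3/5}$, and you give no control of the collinearity parameter $k$. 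To repair the proof you should do what the paper does: keep the statement and proof of Theorem \ref{maintheorem3} verbatim, replace the additive operations on $B,C$ by multiplicative ones, and invoke Lemma \ref{lemmapro} (whose proof is where $\mu(g\cdot h)$ correctly replaces $\mu(g)$) wherever Lemma \ref{lemmasum} was used.
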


\begin{corollary} \label{cor4} Let $f(x,y)=g(x)(h(x)+y)$ be a function defined on $\mathbb{F}_p^{*} \times \mathbb{F}_p^{*}$, where $g,h: \mathbb{F}_p^{*} \rightarrow \mathbb{F}_p^{*}$ are arbitrary functions. Given $m=\mu(g \cdot h)$ is finite. For any subset $A \subset \mathbb{F}_p^{*}$, with $|A| \leq p^{3/5}$, we have:
$$\max\left\{|f(A,A)|,|A.A|\right\} \gg |A|^{\frac{11}{9}-o(1)}.$$
\end{corollary}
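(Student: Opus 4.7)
The plan is to derive Corollary~\ref{cor4} as an immediate specialization of Theorem~\ref{maintheorem4}. I would set $B=C=D=A$ and take $f_1\equiv f_2\equiv f$, so that $g_1=g_2=g$ and $h_1=h_2=h$. Under this identification the hypothesis $|A|\le|B|\le p^{3/5}$ and $|D|\le|C|\le p^{3/5}$ of Theorem~\ref{maintheorem4} collapses to the single condition $|A|\le p^{3/5}$, while the parameter $m=\max\{\mu(g_1\cdot h_1),\mu(g_2\cdot h_2)\}$ becomes precisely $\mu(g\cdot h)$, matching the definition of $m$ in the corollary.

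Next I would compute the exponent on the right-hand side. Adding the exponents of $|A|$ that appear after the substitution,
\[
\frac{5}{18}+\frac{13}{18}+\frac{1}{6}+\frac{1}{18}
=\frac{5+13+3+1}{18}=\frac{22}{18}=\frac{11}{9},
\]
so Theorem~\ref{maintheorem4} yields
\[
\max\bigl\{|f(A,A)|,|A\cdot A|\bigr\}\gtrsim \frac{|A|^{11/9}}{m^{8/9}}.
\]

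Finally I would absorb the $m^{8/9}$ factor. Since $g$ and $h$ are fixed and $m=\mu(g\cdot h)$ is assumed finite, $m^{8/9}$ is an absolute constant, which is swallowed by the $\gg$ notation. Converting the polylogarithmic loss implicit in $\gtrsim$ into the exponent gives the stated $|A|^{11/9-o(1)}$ bound, using the standard translation $(\log|A|)^{-c}=|A|^{-o(1)}$.

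There is no real obstacle: the entire proof is bookkeeping of exponents and a verification that the hypotheses of Theorem~\ref{maintheorem4} are preserved under the diagonal substitution. The only minor subtlety worth flagging explicitly is the passage from $\gtrsim$ to the $|A|^{11/9-o(1)}$ form, which is routine but worth stating for the reader.
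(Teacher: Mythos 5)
Your proposal is correct and follows exactly the route the paper intends: Corollary \ref{cor4} is obtained from Theorem \ref{maintheorem4} by the diagonal substitution $A=B=C=D$, $f_1\equiv f_2\equiv f$, with the exponents summing to $11/9$, the constant $m^{8/9}$ absorbed since $m=\mu(g\cdot h)$ is finite, and the polylogarithmic loss in $\gtrsim$ converted to the $|A|^{11/9-o(1)}$ form. No gaps.
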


\begin{corollary} \label{cor5}
Let $f(x,y)=g(x)(h(x)+y)$ be a function defined on $\mathbb{F}_p^{*} \times \mathbb{F}_p^{*}$, where $g,h: \mathbb{F}_p^{*} \rightarrow \mathbb{F}_p^{*}$ are arbitrary functions. Define $m=\mu(g.h)$. For any subsets $A,B,C \subset \mathbb{F}_p^{*}$, with $|A| \leq |B|,|C| \leq p^{3/5}$, we have
$$\frac{|C|^{5/18}|B|^{13/18}|A|^{2/9}}{m^{8/9}} \lesssim \max \left\{|f(A,B)|,|B \cdot C|\right\}.$$
\end{corollary}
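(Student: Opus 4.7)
The plan is to derive Corollary \ref{cor5} as an immediate specialization of Theorem \ref{maintheorem4}. First I would set $D := A$ and take $f_1 = f_2 := f$ in the theorem, so that $g_1 = g_2 = g$ and $h_1 = h_2 = h$. Under this identification the quantity $\max\{\mu(g_1 h_1), \mu(g_2 h_2)\}$ collapses to $\mu(g \cdot h)$, matching the $m$ of the corollary; simultaneously, the hypotheses $|A| \leq |B| \leq p^{3/5}$ and $|D| \leq |C| \leq p^{3/5}$ of Theorem \ref{maintheorem4} reduce, under $D=A$, to exactly the corollary's hypothesis $|A| \leq |B|,|C| \leq p^{3/5}$.

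Next I would combine the exponents on the right-hand side. Since
\[
|A|^{1/6}\,|D|^{1/18} \;=\; |A|^{1/6+1/18} \;=\; |A|^{4/18} \;=\; |A|^{2/9}
\]
after the substitution $D=A$, Theorem \ref{maintheorem4} delivers the inequality
\[
\max\bigl\{|f(A,B)|,\; |f(A,C)|,\; |B\cdot C|\bigr\} \;\gtrsim\; \frac{|C|^{5/18}|B|^{13/18}|A|^{2/9}}{m^{8/9}},
\]
whose right-hand side already equals the one in the corollary.

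The remaining step is to bring the left-hand side into the two-term form $\max\{|f(A,B)|,|B\cdot C|\}$ that appears in Corollary \ref{cor5}. Because the corollary's size hypothesis is symmetric in $B$ and $C$, I would invoke Theorem \ref{maintheorem4} a second time with the roles of $(A,B)$ and $(D,C)$ interchanged, i.e.\ with parameters $(A,C,B,A)$. Combining the two lower bounds and taking a WLOG swap on whichever of $|f(A,B)|$, $|f(A,C)|$ attains the maximum lets the $|f(A,C)|$ term be dispensed with in favor of $|f(A,B)|$ and $|B\cdot C|$, giving the stated conclusion. This is the only non-routine point, but it is light bookkeeping rather than an obstacle: all of the analytic content has already been carried out in the proof of Theorem \ref{maintheorem4}, and so no genuine difficulty remains.
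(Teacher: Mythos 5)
Your main step---specializing Theorem \ref{maintheorem4} with $D=A$ and $f_1\equiv f_2\equiv f$, so that $m$ becomes $\mu(g\cdot h)$, the size hypotheses collapse to $|A|\le |B|,|C|\le p^{3/5}$, and $|A|^{1/6}|D|^{1/18}=|A|^{2/9}$---is exactly the derivation the paper intends (it is the same specialization announced before Corollary \ref{cor2}), and your exponent arithmetic is correct. The flaw is in the step you dismiss as light bookkeeping. What the specialization actually yields is the three-term bound $\max\{|f(A,B)|,\,|f(A,C)|,\,|B\cdot C|\}\gtrsim |C|^{5/18}|B|^{13/18}|A|^{2/9}m^{-8/9}$, and your device for removing $|f(A,C)|$ does not work: applying Theorem \ref{maintheorem4} a second time with the parameters $(A,C,B,A)$ merely produces another lower bound for the \emph{same} three-term maximum, now with the exponents of $|B|$ and $|C|$ interchanged on the right-hand side. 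It gives no mechanism for trading $|f(A,C)|$ for $|f(A,B)|$; there is no symmetry identifying these two sets, since $f$ is fixed and $B\neq C$ in general. The scenario in which $|f(A,C)|$ is the largest of the three quantities while $|f(A,B)|$ and $|B\cdot C|$ are both small is excluded by neither application, and in that scenario neither application implies the claimed two-term inequality. So the ``WLOG swap'' is not a valid move, and the gap is genuine rather than cosmetic.

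For comparison: the paper itself supplies no argument beyond the bare specialization $D=A$, $f_1\equiv f_2$, so the statement that follows immediately from Theorem \ref{maintheorem4} is the three-term version; the printed two-term form of Corollary \ref{cor5} silently drops $|f(A,C)|$. If you want to actually justify the two-term statement, you cannot do it at the level of the theorem's statement; you would have to go back into its proof, where $f_2(D,C)$ enters only through the bound $E_4^{\times}(C)\lesssim m^4|f_2(D,C)|^2|C|^3/|D|$ from Lemma \ref{lemmapro}, and replace that step by a bound on $E_4^{\times}(C)$ that does not introduce $f(A,C)$ (for instance the trivial $E_4^{\times}(C)\le |C|^5$); any such substitute changes the resulting exponents, so the claimed bound does not survive this route either. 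In short, your route is the paper's route, but the point you wave off is precisely the point that needs (and in your write-up lacks) a real argument.
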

Given $f_1(x,y)=x(1+y)$ and  $f_2(x,y)=x(1-y)$, we obtain a concise version of \cite[Theorem $2$]{Wa} by Warren with a stronger condition.  
\begin{corollary} \label{cor6}
For any subsets $A,B,C,D \subset \mathbb{F}_p^{*}$, with $|A|\leq |B|\leq p^{3/5},|D|\leq |C| \leq p^{3/5}$, we have:
$$\max\left\{|A(1+B)|,|D(1-C)|,|B \cdot C|\right\} \gtrsim |C|^{\frac{5}{18}}|B|^{\frac{13}{18}}|A|^{\frac{1}{6}}|D|^{\frac{1}{18}}.$$
\end{corollary}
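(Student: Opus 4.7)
The plan is to deduce Corollary \ref{cor6} as an immediate specialization of Theorem \ref{maintheorem4}. Since Theorem \ref{maintheorem4} is stated for arbitrary functions $g_1,h_1,g_2,h_2 \colon \mathbb{F}_p^*\to\mathbb{F}_p^*$ and applies to any quadruple $(A,B,C,D)$ satisfying the given size constraints, the task reduces to choosing these four functions so that $f_1(x,y)=x(1+y)$ and $f_2(x,y)=x(1-y)$, and then checking that the resulting quantity $m=\max\{\mu(g_1\cdot h_1),\mu(g_2\cdot h_2)\}$ equals $1$, so that the $m^{8/9}$ factor in the denominator disappears.

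First I would realize $f_1(x,y)=x(1+y)$ in the form $g_1(x)(h_1(x)+y)$ by the obvious choice $g_1(x)=x$ and $h_1(x)=1$ for all $x\in\mathbb{F}_p^*$. The slightly less obvious step is to bring $f_2(x,y)=x(1-y)$ into the same shape: here a sign flip is needed. The identity $x(1-y)=(-x)\bigl((-1)+y\bigr)$ shows that taking $g_2(x)=-x$ and $h_2(x)=-1$ does the job, so $f_2$ also fits the template prescribed by the theorem. With these explicit choices, $f_1(A,B)=A(1+B)$ and $f_2(D,C)=D(1-C)$, matching the three sets appearing in Corollary \ref{cor6}.

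Next I would compute the pointwise products: $g_1(x)h_1(x)=x\cdot 1=x$ and $g_2(x)h_2(x)=(-x)(-1)=x$. In both cases the product is simply the identity map on $\mathbb{F}_p^*$, which is a bijection onto $\mathbb{F}_p^*$, so every fibre has size exactly $1$. Consequently $\mu(g_1\cdot h_1)=\mu(g_2\cdot h_2)=1$ and hence $m=1$.

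Plugging $m=1$ into the conclusion of Theorem \ref{maintheorem4} yields exactly the bound stated in Corollary \ref{cor6}. There is no real obstacle to the argument; the only point requiring a moment's thought is the sign trick needed to write $x(1-y)$ in the form $g_2(x)(h_2(x)+y)$ with $g_2,h_2$ taking values in $\mathbb{F}_p^*$, and to verify that this does not inflate $m$ beyond $1$.
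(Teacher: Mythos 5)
Your specialization is correct and is exactly how the paper obtains Corollary \ref{cor6}: it is Theorem \ref{maintheorem4} applied with $f_1(x,y)=x(1+y)$, $f_2(x,y)=x(1-y)$, where your choices $g_1(x)=x$, $h_1(x)=1$ and $g_2(x)=-x$, $h_2(x)=-1$ (valid since $p$ is odd, so these map $\mathbb{F}_p^*$ to $\mathbb{F}_p^*$) give $g_1h_1=g_2h_2=\mathrm{id}$, hence $m=1$. This matches the paper's (implicit) derivation, so nothing further is needed.
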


Finally, combining Theorems \ref{maintheorem2} and \ref{maintheorem4}, we are able to improve \cite[Theorem $1.10$]{PhMo}.
\begin{theorem} \label{maintheorem5}
Let $f(x,y)=g(x)(h(x)+y)$ be a function defined on $\mathbb{F}_p^{*} \times \mathbb{F}_p^{*}$, where $g,h: \mathbb{F}_p^{*} \rightarrow \mathbb{F}_p^{*}$ are arbitrary functions. Given $\mu(g)$ is finite. For any subsets $A \subset \mathbb{F}_p^{*}$, with $|A| \leq p^{3/5}$, satisfying: 
$$\min \left\{|A+A|,|A \cdot A|\right\} \leq |A|^{\frac{6}{5}-\epsilon},$$
for some $\epsilon>0$. Then, we have
$$|f(A,A)| \gg |A|^{\frac{8}{5}-\frac{3}{25}+\frac{2\epsilon}{5}}.$$
\end{theorem}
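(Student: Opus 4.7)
The plan is to combine the difference-set bound of Theorem \ref{maintheorem2} and the product-set bound of Theorem \ref{maintheorem4} (equivalently, their Corollaries \ref{cor2} and \ref{cor5}) with the Plünnecke--Ruzsa inequality, case-splitting on whether $|A+A|$ or $|A\cdot A|$ realises the minimum in the hypothesis.

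As a baseline, Corollaries \ref{cor1} and \ref{cor4} (which follow from the two theorems with $A=B=C=D$) already yield $|f(A,A)| \gg |A|^{11/9-o(1)}$: the hypothesis $\min\{|A+A|,|A\cdot A|\} \le |A|^{6/5-\epsilon}$ forces the corresponding quantity to be strictly below the Rudnev--Shakan--Shkredov threshold $|A|^{11/9}$ (note $6/5 < 11/9$), so it cannot saturate the maximum in the corollary. The task is to upgrade this baseline $|A|^{11/9}$ to $|A|^{8/5-3/25+2\epsilon/5}=|A|^{37/25+2\epsilon/5}$.

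To carry out the upgrade in the additive case $|A+A|\le |A|^{6/5-\epsilon}$, set $K=|A+A|/|A|\le |A|^{1/5-\epsilon}$ and recall that Plünnecke--Ruzsa gives $|kA-\ell A|\le K^{k+\ell}|A|$ for all $k,\ell\ge 1$. Apply the difference form of Corollary \ref{cor2} with $B,C$ chosen as iterated sumsets of $A$ (for example $B=A+A$, $C=A$, or $B=A$, $C=A+A$): the stated lower bound on $\max\{|f(A,B)|,|B-C|\}$ acquires a factor $K^{\alpha}$ with $\alpha>0$ relative to the baseline, while Plünnecke--Ruzsa caps $|B-C|$ by $K^{O(1)}|A|$. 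A secondary step recycles the auxiliary $|f(A,A+A)|$-term back into a statement about $|f(A,A)|$ using the trivial inequality $|f(A,A+A)|\le |A|\cdot |A+A|$ together with a second application of the corollary, after which an optimisation over the choice of auxiliary sets produces the exponent $37/25+2\epsilon/5$. The symmetric case $|A\cdot A|\le |A|^{6/5-\epsilon}$ is handled analogously using Corollary \ref{cor5} and the multiplicative Plünnecke--Ruzsa inequality applied to iterated product sets $A^{k}/A^{\ell}$.

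The main obstacle is the bookkeeping of exponents: the power of $K$ harvested from the auxiliary sumset must combine with the base exponent $11/9$ so as to produce exactly $37/25$ at $\epsilon=0$, and the linear-in-$\epsilon$ gain $+2\epsilon/5$ must fall out cleanly from the Plünnecke factor. A secondary technicality is that iterated sumsets or product sets may exceed the $p^{3/5}$ threshold required by the hypotheses of Theorems \ref{maintheorem2} and \ref{maintheorem4}, so a further case split will be needed when $|A|$ is close to $p^{3/5}$.
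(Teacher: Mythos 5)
There is a genuine gap: the central mechanism of your plan is never carried out, and as sketched it does not work. When you apply Corollary \ref{cor2} with an auxiliary pair such as $B=A$, $C=A+A$ (or $B=A+A$, $C=A$), the corollary controls $\max\{|f(A,B)|,|B\pm C|\}$, and the branch where the maximum is realised by $|f(A,A+A)|$ carries no information about $|f(A,A)|$, since $f(A,A+A)$ neither contains nor is contained in $f(A,A)$. Your ``recycling'' device, the trivial bound $|f(A,A+A)|\le |A||A+A|$, only helps if the corollary's lower bound exceeds $|A||A+A|$, which it never does (the lower bound is at most about $|A|^{11/9}$ times a power of $K$, while $|A||A+A|\ge |A|^{2}$); and the sumset branch only helps if it contradicts the Pl\"unnecke--Ruzsa cap. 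Checking this with $K=|A+A|/|A|\le |A|^{1/5-\epsilon}$: for $B=A$, $C=A+A$ the corollary gives the lower bound $K^{13/36}|A|^{11/9}$, while $|A-(A+A)|\le K^{3}|A|$, and these are compatible as soon as $K\gtrsim |A|^{8/95}$, which is well inside the admissible range. So for most $K$ allowed by the hypothesis neither branch is excluded and you obtain no lower bound on $|f(A,A)|$ at all; in particular nothing in the proposal shows that an ``optimisation over auxiliary sets'' produces the exponent $37/25+2\epsilon/5$ --- that computation is the entire content of the theorem, not bookkeeping to be deferred.

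The paper's own proof is far more direct and uses neither Pl\"unnecke--Ruzsa nor iterated sumsets: it specialises the inequalities obtained at the end of the proofs of Theorems \ref{maintheorem3} and \ref{maintheorem4} to $A=B=C=D$ and $f_1=f_2=f$, namely $|A|^{11/2}\lesssim |f(A,A)|^{5/2}|A+A|^{2}$ and $|A|^{11/2}\lesssim |f(A,A)|^{5/2}|A\cdot A|^{2}$, multiplies out to $|A|^{11}\lesssim |f(A,A)|^{5}\bigl(\min\{|A+A|,|A\cdot A|\}\bigr)^{4}$, and then substitutes the smallness hypothesis on the minimum. You should also note that your worry about the exponent not ``falling out cleanly'' is well founded: substituting $\min\{|A+A|,|A\cdot A|\}\le |A|^{6/5-\epsilon}$ into the displayed inequality gives $|f(A,A)|\gtrsim |A|^{31/25+4\epsilon/5}$, and the paper's written proof actually works with the threshold $|A|^{9/8-\epsilon}$ and concludes $|f(A,A)|\gtrsim |A|^{13/10+4\epsilon/5}$; neither matches the exponent $8/5-3/25+2\epsilon/5=37/25+2\epsilon/5$ claimed in the statement, so no argument in the paper (and certainly not your sketch) delivers the theorem exactly as printed.
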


\section{Preliminaries}
\noindent Let $G$ be an abelian group, and let $A,B$ be two finite subsets of $G.$ For any real number $n>1$, we define the representation functions:
$$r_{A-B}(x):= \# \{(a,b) \in A\times B: x = a - b,x\in G \},$$
$$E_n(A,B):= \sum_{x} r^n_{A-B}(x),\, E_n(A):= E_n(A,A).$$
Similarly, 
 $$r_{A/B}(x):= \# \{(a,b) \in A\times B: x = a . b^{-1}, x\in G \},$$
 $$E_n^{\times}(A,B):= \sum_{x} r^n_{A/B}(x),\, E_n^{\times}(A):= E_n^{\times}(A,A).$$
The initial idea of this paper is to use the Rudnev point-plane incidence's theorem and the theory of higher order energies to optimize the bounds on $E_4(C,D), E_4^{\times}(C,D)$ for some small sets $C,D$. 
\begin{theorem}\label{theo4}\textsl{(Rudnev, \cite{Ru})}  Let $\mathcal{R}$ be a set of points in $\mathbb{F}_p^3$ and let $\mathcal{S}$ be a set of planes
in $\mathbb{F}_p^3$, with $|\mathcal{R}| \ll |\mathcal{S}|$ and $|\mathcal{R}| \ll p^2.$ Assume that there is no line containing $k$ points of $\mathcal{R}.$ Then
\[ I(\mathcal{R},\mathcal{S}) \ll |\mathcal{R}|^{1/2}|\mathcal{S}|+k\,|\mathcal{S}|.\]
\end{theorem}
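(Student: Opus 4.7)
The plan is to follow Rudnev's original argument from \cite{Ru}, which transforms the point-plane incidence question in $\mathbb{F}_p^3$ into a question about pairs of intersecting lines in $\mathbb{F}_p^3$, and then invokes a polynomial-method line-incidence estimate in positive characteristic. The hypothesis $|\mathcal{R}|\ll p^2$ is exactly what is needed to keep algebraic surfaces of moderate degree from filling up the ambient space, and the no-$k$-collinear condition is exactly what rules out the exceptional ruled-surface case.

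The first step is to set up a clean counting framework. After a harmless dyadic pigeonhole on $\mathcal{S}$, I may assume that there is a single parameter $t$ such that every $\pi\in\mathcal{S}$ is incident to $\Theta(t)$ points of $\mathcal{R}$, so that $I(\mathcal{R},\mathcal{S})\asymp t|\mathcal{S}|$, and the target reduces to $t\ll |\mathcal{R}|^{1/2}+k$. I would then square: the sum $\sum_{\pi}\binom{t_\pi}{2}$ counts unordered pairs $(p,q)$ of distinct $\mathcal{R}$-points lying on a common plane of $\mathcal{S}$. Using $|\mathcal{R}|\ll |\mathcal{S}|$ and Cauchy--Schwarz to pass from $I$ to this pair-count gives a useful lower bound roughly of the form $t^2|\mathcal{S}|\lesssim \sum_{\ell} r_\ell\, s_\ell$, where the sum runs over lines $\ell\subset\mathbb{F}_p^3$ meeting $\mathcal{R}$ in $r_\ell\ge 2$ points and contained in $s_\ell$ planes of $\mathcal{S}$.

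The second step is projective duality. Parameterize planes $ax+by+cz=d$ by the associated points in a dual affine chart, sending $\mathcal{S}$ to a point set $\mathcal{S}^*\subset\mathbb{F}_p^3$ and turning each primal line $\ell$ (determined by a pair of points of $\mathcal{R}$) into a dual line whose $\mathcal{S}^*$-incidences equal $s_\ell$. After this duality the quantity $\sum_\ell r_\ell s_\ell$ is exactly counting intersecting pairs between two line families in $\mathbb{F}_p^3$ — primal lines through many $\mathcal{R}$-points and dual lines through many $\mathcal{S}^*$-points — which is the classical Guth--Katz setting. Here the collinearity hypothesis $r_\ell<k$ feeds directly into the requirement that no algebraic surface contains too many lines of a given family.

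The third step is to invoke the $\mathbb{F}_p$-analogue of the Guth--Katz line-incidence theorem: for $N$ lines in $\mathbb{F}_p^3$ with $N\ll p^2$ and no more than $\sqrt{N}$ lines on any common plane or regulus, the number of $2$-rich points is $O(N^{3/2})$. Applied to the above line-line incidence count, this yields the main term $|\mathcal{R}|^{1/2}|\mathcal{S}|$, while configurations that violate the regulus bound — which correspond combinatorially to lines carrying many collinear $\mathcal{R}$-points — are precisely handled by the additive $k|\mathcal{S}|$ term. The main obstacle will be executing the polynomial partitioning step in positive characteristic, where one cannot apply the ham-sandwich theorem directly and must instead bound the degree of a vanishing polynomial via the Kollár/Quilodrán-style argument, while carefully tracking that the hypothesis $|\mathcal{R}|\ll p^2$ keeps the partitioning polynomial's degree small enough not to overwhelm the ambient $\mathbb{F}_p^3$. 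Reconciling the combinatorial meaning of $k$ (collinear points) with its algebraic role (lines on a ruled surface) is the most delicate bookkeeping in the argument.
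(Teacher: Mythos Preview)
The paper does not prove this theorem at all: it is quoted from Rudnev's paper \cite{Ru} as a black-box tool in the Preliminaries section, with no argument supplied. So there is no ``paper's own proof'' to compare against, and for the purposes of this manuscript the correct move is simply to cite \cite{Ru} and move on.

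That said, your sketch is a recognisable outline of Rudnev's original argument, and the key ingredients you name --- projective duality reducing point-plane incidences to pairs of intersecting lines, and a positive-characteristic Guth--Katz/Koll\'ar-type bound on such pairs under the condition $N\ll p^2$ --- are indeed the two pillars of the proof. One inaccuracy worth flagging: the intermediate ``squaring'' step you describe (passing to $\sum_\pi\binom{t_\pi}{2}$ and then summing over primal lines $\ell$ with weights $r_\ell s_\ell$) is not how Rudnev actually proceeds. His reduction is more direct: a point-plane incidence in $\mathbb{F}_p^3$ lifts, via the Klein quadric or an explicit affine-chart parametrisation, to an intersecting pair of lines, one from a family determined by $\mathcal{R}$ and one from a family determined by $\mathcal{S}$, and the count $I(\mathcal{R},\mathcal{S})$ is bounded directly by the line-pair theorem without any Cauchy--Schwarz detour. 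Your version would lose a factor in the exponents. Also, the collinearity parameter $k$ enters not as a ``lines on a ruled surface'' obstruction but as the bound on how many lines of the $\mathcal{R}$-family can be concurrent (since collinear points in $\mathcal{R}$ give concurrent dual lines), which is the hypothesis the line-pair theorem needs. If you want to include a genuine proof rather than a citation, you should align those two steps with Rudnev's actual construction.
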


\begin{lemma}\label{lemmasum}
Let $f(x,y)=g(x)(h(x)+y)$ be a function defined on $\mathbb{F}_p^{*} \times \mathbb{F}_p^{*}$, where $g,h: \mathbb{F}_p^{*} \rightarrow \mathbb{F}_p^{*}$ are arbitrary functions. Define $m=\mu(g)$. For any subsets $A,B,C \subset \mathbb{F}_p^{*}$, with $|A|\ll |B|$ and $|A|,|B|,|C| \leq p^{3/5}$, we have:
$$E_4(B,C):=\sum_{x}r_{B-C}^4(x) \ll  m^4\cdot\min\left\{\frac{|f(A,B)|^3|C|^2}{|A|}, \frac{|f(A,B)|^2|C|^3}{|A|}\right\}\cdot\log|A|.$$
\end{lemma}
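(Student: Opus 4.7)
The plan is to combine a dyadic decomposition of $E_4(B,C)=\sum_x r_{B-C}(x)^4$ with Rudnev's point-plane incidence bound (Theorem \ref{theo4}), following the strategy of Rudnev--Shakan--Shkredov \cite{RuSh1}. Write $K:=|f(A,B)|$. The central algebraic observation is the identity
\[
f(a,b)-f(a,c)=g(a)(b-c),\qquad (a,b,c)\in (\mathbb{F}_p^*)^3,
\]
which ties the additive structure of $B-C$ to $f(A,B)$ and to the multiplicative data $g(A)$.

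First, I would dyadically decompose: partition the nonzero differences $x\in B-C$ into level sets $P_\Delta=\{x\neq 0:\Delta\le r_{B-C}(x)<2\Delta\}$ for $\Delta=2^j$, $j=0,1,\dots,\lfloor\log_2|C|\rfloor$. The degenerate contribution $r_{B-C}(0)^4=|B\cap C|^4\le |C|^4$ is absorbed by the right-hand side under the stated hypotheses, so it suffices to show, for each level $(P_\Delta,\Delta)$,
\[
|P_\Delta|\,\Delta^4 \;\lesssim\; m^4\cdot\min\!\Big\{\tfrac{K^3|C|^2}{|A|},\;\tfrac{K^2|C|^3}{|A|}\Big\},
\]
and then sum over the $O(\log|A|)$ levels to recover the logarithmic factor.

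Second, for a fixed dyadic level $(P,\Delta)$, I would encode the relevant count as a point-plane incidence. Consider the quintuples $(a,b_1,c_1,b_2,c_2)\in A\times B\times C\times B\times C$ satisfying $b_1-c_1=b_2-c_2\in P$; there are at least $|A|\,|P|\,\Delta^2$ of them. The identity forces $f(a,b_i)-f(a,c_i)=g(a)x$ for $i=1,2$, so each quintuple produces (after a suitable projection) a point of the form $(f(a,b_1),f(a,c_1),f(a,c_2))\in F\times f(A,C)\times f(A,C)$ lying on a plane in $\mathbb{F}_p^3$ whose coefficients involve $g(a)$ and the difference $b_1-b_2$. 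The problem reduces to counting incidences between these structured point and plane sets: the number of distinct planes is controlled by $|g(A)|\le |A|/m$, and each point-plane incidence accounts for at most $m$ quintuples because of $\mu(g)\le m$. Applying Theorem~\ref{theo4} with the size bounds on $|\mathcal R|$ and $|\mathcal S|$, and then invoking Cauchy--Schwarz in the variable $x\in P$ to upgrade $\Delta^2$ to $\Delta^4$, produces the bound $m^4K^2|C|^3/|A|$; a complementary choice of projection---interchanging the roles of $F$ and $f(A,C)$ among the point coordinates---yields the companion bound $m^4K^3|C|^2/|A|$, completing the $\min$. The four powers of $m$ accumulate from the $\mu(g)\le m$ cap applied at several stages of the incidence count.

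The main obstacle I anticipate is the collinear-points term $k|\mathcal{S}|$ in Rudnev's inequality. Excising $x=0$ at the outset is essential, since the degenerate plane $Y=Z$ would otherwise support too many configuration points; once this is done, $\mu(g)\le m$ caps the richness of any line through the configuration. The hypothesis $|A|,|B|,|C|\le p^{3/5}$ ensures the non-trivial regime $|\mathcal{R}|\ll p^2$ of Theorem~\ref{theo4}, so that Rudnev's bound delivers the useful $|\mathcal{R}|^{1/2}|\mathcal{S}|$ term rather than the trivial one. The remaining work is careful bookkeeping of the exponents of $K$, $|C|$, $|A|$, and $m$, combined with the dyadic summation.
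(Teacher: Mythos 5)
Your toolkit is the right one (dyadic decomposition, Rudnev's Theorem~\ref{theo4}, Cauchy--Schwarz, the multiplicity $\mu(g)\le m$), but the central counting scheme, which is the heart of the proof, does not work as you describe it. The paper's argument lower-bounds the number $M$ of quadruples $(a,b,x,c)\in A\times B\times X_k\times C$ with $g(a)(x+c+h(a))=f(a,b)$, so that $M\ge k|A|n_k$ with $n_k=|X_k|$ the size of the super-level set, and only then applies Cauchy--Schwarz, once over the value $f(a,b)\in f(A,B)$ and once over $c\in C$, to pass to a collision count that is at most $m^2$ times a point--plane incidence count whose point and plane sets are \emph{parametrized} by $A\times C\times X_k$ (respectively $A\times X_k\times f(A,B)$), hence have size $|A||C|n_k$ (resp.\ $|A||f(A,B)|n_k$). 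The decisive feature is the power mismatch: the left-hand side is $(k|A|n_k)^2$, quadratic in $n_k$, while Rudnev gives $n_k^{3/2}$ on the right, so one can solve for $n_k$ and the factor $k^4$ survives; the two terms of the minimum come from the two different eliminations. Your scheme instead starts from the quintuple count with $b_1-c_1=b_2-c_2\in P_\Delta$, which is already quadratic in the representation function ($\approx|A||P_\Delta|\Delta^2$), and proposes to bound it directly by incidences. If $P_\Delta$ enters the point/plane parametrization (as it must for $\Delta$ to appear at all), you get $|P_\Delta|$ to the first power on the left against $|P_\Delta|^{3/2}$ on the right, which is vacuous; if instead you use the points $\left(f(a,b_1),f(a,c_1),f(a,c_2)\right)$ as written, the point set sits inside $f(A,B)\times f(A,C)\times f(A,C)$ and its size is not controlled by anything available in the lemma --- $|f(A,C)|$ never appears in the statement and cannot be bounded by $|f(A,B)|,|C|,|A|,m$. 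Finally, the promised ``Cauchy--Schwarz in $x\in P$ to upgrade $\Delta^2$ to $\Delta^4$'' goes in the wrong direction: Cauchy--Schwarz passes from higher to lower moments at a loss and cannot manufacture the fourth power (equivalently the $k^4$ in the denominator of $n_k\ll m^4k^{-4}\min\{|f(A,B)|^2|C|^3,|f(A,B)|^3|C|^2\}/|A|$) out of a second-moment count. So the precise incidence encoding that actually yields the two bounds of the minimum is missing.

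Two further points would also need repair. First, ``the number of distinct planes is controlled by $|g(A)|\le|A|/m$'' is backwards: $\mu(g)\le m$ gives $|g(A)|\ge|A|/m$, and in the correct argument $m$ enters as a factor $m^2$ converting the collision count into incidences (becoming $m^4$ after squaring), not through the size of $g(A)$. Second, the hypothesis $|A|,|B|,|C|\le p^{3/5}$ does \emph{not} automatically place you in the regime $|\mathcal{R}|\ll p^2$: the relevant point set has size $|A||C|n_k$, which a priori can be as large as $p^{12/5}$, and the paper needs a genuine case analysis --- the case $|X_k||A||C|\gg p^2$ is shown to contradict the $p^{3/5}$ hypothesis (or to give the desired bound outright), and the cases where the collinearity parameter $\max\{|A|,|C|,|X_k|\}$ exceeds $(|A||C||X_k|)^{1/2}$ are handled by separate elementary estimates. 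Your remarks that excising $x=0$ and invoking $\mu(g)\le m$ ``cap the richness of any line'' do not address this: lines in the $x$-direction can contain $|X_k|$ configuration points regardless of $m$, and it is exactly this possibility that forces the paper's sub-case analysis.
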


\begin{proof} For $1 \leq k \leq \text{min}\{|A|,|B|,|C|\}$, let $n_k:=\left|X_k:=\{ x \in B-C:r_{B-C}(x) \geq k\}\right|.$ By a dyadic decomposition, there exist a number $k$ such that $E_4(B,C) \ll k^4n_k.$ We consider the following equations:
\begin{align} \label{eq1}
g(a)(x+c+h(a))-f(a,b)=0
\Leftrightarrow c=\frac{f(a,b)}{g(a)}-x-h(a),
\end{align}
where $a \in A, b \in B, x \in X_k, c \in C$. 

Let $M$ be the number of solutions to the equation \eqref{eq1}, we have $M \ge k|A|n_k$ by the definition of $X_k.$ On the other hand, by using Cauchy-Schwartz inequality on each equation in (\ref{eq1}), we obtain that: 
\begin{align} \label{E_1}
M &\leq |f(A,B)|^{1/2}.\sqrt{\left|\{(a,x,c,a',x',c')\in (A \times X_k \times C)^2: g(a)(x+c+h(a))=g(a')(x'+c'+h(a')\}\right|} \notag\\
&=:|f(A,B)|^{1/2}\sqrt{\mathcal{E}_1},
\end{align}
and also 
\begin{align} \label{E_2}
M &\leq |C|^{1/2}\sqrt{\left|\{(a,x,f,a',x',f') \in (A \times X \times f(A,B))^2:\frac{f(a,b)}{g(a)}-x-h(a)=\frac{f(a',b')}{g(a')}-x-h(a')\}\right| } \notag  \\
&=:|C|^{1/2}\sqrt{\mathcal{E}_2}.
\end{align} 
Firstly, we obtain the upper bound on $E_4(B,C)$ via $\mathcal{E}_1$. We consider the following cases. 

\textbf{Case 1:} If $|X_k||A||C| \gg p^2,$ our assumptions and the definition of $X_k$ provide that $k \ll \min\{ |A|,|B|,|C|\}$, $|A| \ll |B| \ll |f(A,B)|$, and $k|X_k| \ll |B||C|$.

If $k^3 \ll \dfrac{|f(A,B)|^2|C|^2}{|A||B|}$, then 
\[ |X_k|k^4 \ll |B||C|k^3 \ll \dfrac{|f(A,B)|^2|C|^3}{|A|}\] and the  
result follows from $E_4(B,C) \ll k^4|X_k|.$ We therefore assume that $k^3 \gg \dfrac{|f(A,B)|^2 |C|^2}{|A||B|}$, then 
\[|B||C||A||C| \gg |X_k|t|A||C| \gg kp^2 \gg p^2\dfrac{|f(A,B)|^{2/3}. |C|^{2/3}}{|A|^{1/3}.|B|^{1/3}}.\]
This leads to $\left(|A||B||C|\right)^{4/3} \gg p^2 \left|f(A,B)\right|^{2/3} \gg  p^2|B|^{2/3}.$ It follows that
 \[ \max\left(|A|,|B|,|C|\right) \gg p^{3/5}.\]
This contradicts our setting-up condition.

\textbf{Case 2:} If $|X_k||A||C| \ll p^2,$ we define the set of points $\mathcal{R}_1$ and the set of planes $\mathcal{S}_1$ as following. 
\begin{align*}
\mathcal{R}_1&:=\left\{ (x,g(a'),g(a').(c'+h(a')): (a',c',x) \in A \times C \times X_k\right\}, \\
\mathcal{S}_1&:=\left\{g(a)\cdot X-x' \cdot Y-Z+g(a).(c+h(a))=0: (a,c,x') \in A \times C \times X_k\right\}.
\end{align*}

Note that $\mu(g)=m$ or there are at most $m$ different values of $a$ satisfying the equation $g(a)=t$ for all $t \in \mathbb{F}^*$. Therefore, we have
\[ \mathcal{E}_1 \leq m^2 \cdot I(\mathcal{R}_1,\mathcal{S}_1),\]
in which, $I(\mathcal{R}_1,\mathcal{S}_1)$ is the number incidences between $\mathcal{R}_1$ and $\mathcal{S}_1$.

To apply Theorem \ref{theo4}, we need to find an upper bound on the maximum number of collinear points in $\mathcal{R}_1$. The projection of $\mathcal{R}_1$ into the last two coordinates is the set $\mathcal{T}= \{(g(a'),g(a')(c'+h(a')): a' \in A, c \in C\}$. The set $\mathcal{T}$ can be covered by at most $|A|$ lines of the form $X =g(a')$ with $a' \in A$, where each line contains $|C|$ points of $\mathcal{T}$. Therefore, a line in $ \mathbb{F}_p^3$ contains at most $\max \{|A|, |C|\}$ points of $\mathcal{R}_1$, unless it is vertical, in which case it contains at most $|X_k|$ points. All implies the maximum number of collinear poins in $\mathcal{R}_1$ is at most $M_1 := \max\left\{ |A|, |C|, |X_k| \right\}.$ We fall into the different situations as follows.

\textit{Case 2.1.} If $M_1=\max\left(|X_k,|A|,|C|\right)\gg (|A||X_k||C|)^{1/2}$, then since $k \ll \min\left(|A|,|B|,|C|\right)$ and $|A|\ll |B| \ll |f(A,B)|$, we obtain the followings:\\
\begin{itemize}
\item[1.] If $M_1=|C|$ and $|X_k||A|\ll |C|$, then
\[ E_4(B,C)\ll |X_k|k^4 \ll |C|k^3 \ll \dfrac{|f(A,B)|^2}{|A|}|C|^3.\]

\item[2.] If $M_1=|X_k|$ and $|A||C|\ll |X_k|$, then $|A||C|k \ll|X_k|k \ll |B||C|$, that implies $|A|k \ll |B|.$ It follows that
\[ E_4(B,C)\ll |X_k|k^4 \ll \frac{|X_k|k}{|A|}(|A|k)k^2 \ll \frac{|B||C|}{|A|}|B|k^2 \ll \frac{|f(A,B)|^2}{|A|}|C|^3.\]

\item[3.] If $M_1=|A|$ and $|X_k||C| \ll |A|,$ then
$$E_4(B,C) \ll |X_k|k^4 \ll |A|k^3 \ll \frac{|f(A,B)|^2}{|A|}|C|^3.$$
\end{itemize}

\textit{Case 2.2.} If $M_1=\max\left(|X_k,|A|,|C|\right)\ll (|A||X_k||C|)^{1/2}$ and $|\mathcal{R}_1| =|A||C||X_k| \ll p^2$, applying Theorem \ref{theo4}, we obtain 
\begin{align*}
I(\mathcal{R}_1,\mathcal{S}_1) &\ll (|A|.|X_k|.|C|)^{3/2}+ \max\left\{|A|,|C|,|X_k|\right\}(|C||A||X_k|) \\
&\ll (|A||X_k||C|)^{3/2}.
\end{align*}

The sub-cases 2.1 and 2.2 together imply that for $|X_k|.|A|.|C| \ll p^2,$ then either
$$I(\mathcal{R}_1,\mathcal{S}_1) \ll (|A|.|X_k|.|C|)^{3/2}$$
or 
$$E_4(B,C) \ll \frac{|f(A,B|^2}{|A|}.|C|^3.$$
Therefore, collecting all above cases and \eqref{E_1}, we get either
$$E_4(B,C)\ll \frac{|f(A,B)|^2.|C|^3}{|A|}$$
or  $k|A|n_k \leq m|f(A,B)|^{1/2}I(\mathcal{R}_1,\mathcal{S}_1)\ll m|f(A,B)|^{1/2}(|A||C|n_k)^{3/2},$ i.e
\[n_k \ll \frac{m^4}{k^4}\frac{|f(A,B)|^2|C|^3}{|A|}.\] 

Now, we use $\mathcal{E}_2$ to obtain another bound on $M$. Similarly, we define a set of points $\mathcal{R}_2$ and a set of planes $\mathcal{S}_2$ as follows:
\begin{align*}
\mathcal{R}_2 &= \left\{ (f,\frac{1}{g(a')}, h(a')+x'): (a',x',f) \in A \times X_k \times f(A,B)\right\},\\ 
\mathcal{S}_2 &= \left\{ \frac{1}{g(a)}\cdot X-f'\cdot Y+Z-h(a)-x=0: (a,x,f') \in A \times X_k \times f(A,B)\right\}.
\end{align*}

Clearly, $|\mathcal{S}_2|=|\mathcal{R}_2| \ll |f(A,B)||A||X_k|$ and the maximal number of collinear points in $\mathcal{R}_2$ is at most  $\max \{|f(A,B)|,|A|,|X_k|\}$. \\
As same as the procedure above, since $|A|,|B|,|C| \leq p^{3/5}$, applying Theorem \ref{theo4}, we obtain either
\[ I(\mathcal{R}_2,\mathcal{S}_2) \ll (|f(A,B)||A||X_k|)^{3/2}\]
or 
\[ E_4(B,C) \leq \frac{|f(A,B)|^3.|C|^2}{|A|}.\]
Together with \eqref{E_2}, we get 
\[ k^4|A|^4n_k^4 \ll M^4 \ll m^4|C|^2|f(A,B)|^3|A|^3n_k^3\]
This implies 
\[ n_k \ll \frac{1}{k^4}\frac{m^4|f(A,B)|^3|C|^2}{|A|}.\]
All above implies either
\[ E_4(B,C) \ll \min\left\{ \frac{|f(A,B)|^3|C|^2}{|A|}, \frac{|f(A,B)|^2|C|^3}{|A|}\right\}\]
or
\[n_k \ll \frac{m^4}{k^4} \cdot\min\left\{ \frac{|f(A,B)|^3|C|^2}{|A|}, \frac{|f(A,B)|^2|C|^3}{|A|} \right\}.\]
Following after dyadic summation in $k$, we finally get
\[ E_4(B,C):=\sum_{x}r_{B-C}^4(x) \ll \left( m^4 \cdot \min\left\{ \frac{|f(A,B)|^3|C|^2}{|A|}, \frac{|f(A,B)|^2|C|^3}{|A|}\right\}\right)\cdot\text{log}|A|,\]
which completes the proof of Lemma \ref{lemmasum}.
\end{proof}

\textbf{Remark.} If $B=C$ and taking the later side of the minimum, the above inequality reduces to
\begin{align*}
E_4(B) &\lesssim m^4 \cdot \frac{|f_1(A,B)|^2|B|^3}{|A|}, \\
E_4(C) &\lesssim m^4 \cdot \frac{|f_2(D,C)|^2|C|^3}{|D|}
\end{align*}
for any $|A|,|B|,|C|,|D| \leq p^{3/5}$ and the functions $f_1(x,y)=g_1(x)(h_1(x)+y),f_2(x,y)=g_2(x)(h_2(x)+y)$ with $\mu(g_1),\mu(g_2) \leq m$.

Similarly, we obtain the bound on the product-representation function
.
\begin{lemma} \label{lemmapro}
Let $f(x,y)=g(x)(h(x)+y)$ be a function defined on $\mathbb{F}_p^{*} \times \mathbb{F}_p^{*}$, where $g,h: \mathbb{F}_p^{*} \rightarrow \mathbb{F}_p^{*}$ are arbitrary functions. Define $m=\mu(g.h)$. For any subsets $A,B,C \subset \mathbb{F}_p^{*}$, with $|A|,|B|,|C| \leq p^{3/5}$, we have:
$$E_4^{\times}(B,C):=\sum_{x}r_{B/C}^4(x) \ll \left( m^4 \cdot \min \left\{\frac{|f(A,B)|^3|C|^2}{|A|}, \frac{|f(A,B)|^2|C|^3}{|A|}\right\}\right)\cdot\log{|A|}.$$
\end{lemma}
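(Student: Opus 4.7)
The plan is to mirror the proof of Lemma \ref{lemmasum} almost line-by-line, replacing the additive collision equation by its multiplicative counterpart $g(a)(h(a)+xc)=g(a')(h(a')+x'c')$ and working with $r_{B/C}$ in place of $r_{B-C}$. First I would perform the standard dyadic decomposition on $r_{B/C}$ to fix some $k$ with $E_4^{\times}(B,C) \ll k^4 n_k \log|A|$, where $n_k = |X_k|$ and $X_k = \{x \in B/C : r_{B/C}(x) \ge k\}$; then let $M$ count quadruples $(a,b,x,c) \in A \times B \times X_k \times C$ with $b = xc$, so that $M \ge |A|\,k\,n_k$.

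Two applications of Cauchy--Schwarz, exactly as in the additive case, will give the two terms of the minimum. Summing over the value $v = f(a,b)$ yields $M \le |f(A,B)|^{1/2}\sqrt{\mathcal{E}_1^{\times}}$ where $\mathcal{E}_1^{\times}$ counts sextuples $(a,x,c,a',x',c') \in (A\times X_k\times C)^2$ solving the collision equation. Summing over $c = (v-(gh)(a))/(g(a)x)$ yields $M \le |C|^{1/2}\sqrt{\mathcal{E}_2^{\times}}$ where $\mathcal{E}_2^{\times}$ counts sextuples $(a,x,v,a',x',v') \in (A\times X_k\times f(A,B))^2$ with $(v-(gh)(a))\,g(a')x' = (v'-(gh)(a'))\,g(a)x$.

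The heart of the proof is to bound each $\mathcal{E}_i^{\times}$ via Theorem \ref{theo4}. The trilinear term $g(a)xc$ has to be split carefully: I would pair $c$ with $a$ on the plane side (as a compound coefficient $g(a)c$) and $c'$ with $a'$ on the point side (as a compound coordinate $g(a')c'$). Concretely, take
$$\mathcal{R}_1 = \{(x,\,g(a')c',\,(gh)(a')) : (a',x,c')\in A\times X_k\times C\},$$
$$\mathcal{S}_1 = \{g(a)c\,X - x'\,Y - Z + (gh)(a)=0 : (a,x',c)\in A\times X_k\times C\};$$
a direct substitution confirms that incidences encode the collision equation. The hypothesis $\mu(gh)=m$ now plays exactly the role that $\mu(g)=m$ played in Lemma \ref{lemmasum}: given a point, its last coordinate $(gh)(a')$ determines $a'$ up to $m$ choices, and likewise the constant term $(gh)(a)$ of a plane determines $a$ up to $m$ choices, so $\mathcal{E}_1^{\times} \le m^2\,I(\mathcal{R}_1,\mathcal{S}_1)$. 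For $\mathcal{E}_2^{\times}$ a parallel configuration works, with points $(v\,g(a')x',\,g(a')x',\,(gh)(a'))$ and planes $X - (gh)(a)\,Y + g(a)x\,Z - v'g(a)x = 0$; the coordinate $(gh)(a')$ and the coefficient $-(gh)(a)$ again absorb the $m^2$ factor.

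Finally I would apply Rudnev's theorem to each system. Projecting $\mathcal{R}_1$ onto its last two coordinates, the image splits over fibres of $(gh)$: each fibre contributes a horizontal segment of at most $m|C|$ points, which combined with the vertical- and diagonal-line analysis carried over from Lemma \ref{lemmasum} yields a maximum-collinearity bound $M_1 \le \max\{|X_k|,|A|,m|C|\}$, and an analogous inequality for $\mathcal{R}_2$. The sub-case dichotomy of Lemma \ref{lemmasum} then transfers: under $|A|,|B|,|C|\le p^{3/5}$, one either has $|\mathcal{R}_i| \ll p^2$ with $M_i \ll |\mathcal{R}_i|^{1/2}$, in which case Theorem \ref{theo4} gives $I(\mathcal{R}_i,\mathcal{S}_i) \ll |\mathcal{R}_i|^{3/2}$, or else the opposite assumption forces the desired bound on $E_4^{\times}$ directly. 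Combining the two resulting estimates and summing dyadically in $k$ yields the claim. The main obstacle I anticipate is the extra factor of $m$ in the max-collinearity parameter (arising because the projection now stratifies over the $\mu(gh)$-to-one fibres of $gh$ rather than the $\mu(g)$-to-one fibres of $g$); one has to check that this does not spoil the sub-case inequalities under the $p^{3/5}$ hypothesis, and that the resulting $m^4$ factor from $m^2\cdot m^2$ appears exactly as in the statement of the lemma.
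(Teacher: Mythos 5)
Your proposal is correct and follows essentially the same route as the paper: the same dyadic decomposition in $k$, the same two Cauchy--Schwarz energies, an identical first point--plane system $\mathcal{R}_1,\mathcal{S}_1$, and a second system differing from the paper's only by clearing denominators, with $\mu(g\cdot h)=m$ absorbing the $m^2$ multiplicity on each side exactly as in the paper's argument before applying Theorem \ref{theo4} and summing dyadically. Your extra care about the collinearity parameter becoming $\max\{|X_k|,|A|,m|C|\}$ addresses a point the paper silently glosses over, and, as you anticipate, the spare factor of $m$ there is harmless since the stated bound already carries $m^4$.
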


\begin{proof} For $1 \leq k \leq \text{min}\{|A|,|B|,|C|\}$, let $s_k:=|Y_k:=\{ x \in B/C:r_{B/C}(x) \geq k\}|.$ By a dyadic decomposition, there exist a number $k$ such that $E^{\times}_4(B,C) \ll |Y_k|k^4.$ We consider the following equations
\begin{align} \label{proeq1}
g(a)(x \cdot c+h(a))-f(a,b)=0 
\Leftrightarrow c = x^{-1}\left(\frac{f(a,b)}{g(a)}-h(a)\right),
\end{align}
where $a \in A, b \in B, x \in Y_k, c \in C$. Clearly, there are $M' \geq k|A|.n_k$ solutions to the above equations. By using Cauchy-Schwarz inequality on each of them, we obtain that 
\begin{align}\label{E_3}
M' &\leq |f(A,B)|^{1/2}\sqrt{\left|\left\{(a,x,c,a',x',c')\in (A \times Y_k \times C)^2: g(a)(x\cdot c+h(a))=g(a')(x' \cdot c'+h(a')\right\}\right|} \notag\\
&=:|f(A,B)|^{1/2}\sqrt{\mathcal{E}_3}, 
\end{align}
and 
\begin{align}\label{E_4}
M' &\leq |C|^{1/2}\sqrt{\left|\left\{(a,x,f,a',x',f') \in (A \times Y_k \times f(A,B))^2:\frac{1}{x}\left(\frac{f(a,b)}{g(a)}-h(a)\right)=\frac{1}{x'}\left(\frac{f(a',b')}{g(a')}-h(a')\right)\right\}\right|} \notag \\
&=: |C|^{1/2}\sqrt{\mathcal{E}_4}
\end{align} 

Firstly, we obtain the upper bound on $\mathcal{E}_3$. Define the set of points $\mathcal{R}_1$ and the set of planes $\mathcal{S}_1$ as following:
\begin{align*}
\mathcal{R}_1 &= \left\{ \left(x,g(a')c',g(a')h(a')\right): (a',c',x') \in A \times C \times Y_k\right\},\\
\mathcal{S}_1 &= \left\{g(a)c \cdot X - x' \cdot Y - Z + g(a)h(a)=0: (a,c,x') \in A \times C \times Y_k\right\}.
\end{align*}

Note that $m=\mu(g \cdot h)$ or there are at most $m$ different values of $a$ satisfying the equation $g(a)h(a)=t$  for any $t \in \mathbb{F}^{*}$. Therefore, we have

\[ E_3 \leq m^2 \cdot I(\mathcal{R}_1,\mathcal{S}_1)\]

in which, $I(\mathcal{R}_1,\mathcal{S}_1)$ is the number of incidences between $\mathcal{R}_1$, and  $\mathcal{S}_1$.

Similar to the Lemma \ref{lemmasum} and applying the Theorem \ref{theo4}, we also get either 
$$s_k\ll \frac{m^4}{k^4}\dfrac{|f(A,B)|^2|C|^3}{|A|},$$
or 
$$E^{\times}_4(B,C) \ll \frac{|f(A,B)|^2|C|^3}{|A|}.$$

We now use $\mathcal{E}_4$ to obtain another bound on $M'$. By the same procedure, we define a set of points $\mathcal{R}_2$ and a set of planes $\mathcal{S}_2$ as
\begin{align*} 
\mathcal{R}_2 &= \left\{ \left(f,\frac{1}{g(a').x'}, \frac{h(a')}{x'}\right): (a',x',f) \in A \times Y_k \times f(A,B)\right\}, \\
\mathcal{S}_2 &= \left\{ \frac{1}{xg(a)}\cdot X - f'\cdot Y+Z- \frac{h(a)}{x} = 0 : (a,x,f') \in A \times Y_k \times f(A,B)\right\}.
\end{align*}
Note that if $u = 1/\left(xg(a)\right) , v = x^{-1}h(a)$, then $g(a)h(a)=vu^{-1}$. Therefore, once again, we have
$$E_4 \leq m^2 \cdot I(\mathcal{R}_2,\mathcal{S}_2)$$
in which, $I(\mathcal{R}_2,\mathcal{S}_2)$ is the number of incidences between $\mathcal{R}_2$ and $\mathcal{S}_2$. Moreover, $|\mathcal{S}_2|=|\mathcal{R}_2| \ll |f(A,B)||A||Y_k|$. Again, since $|A|,|B|,|C| \leq p^{3/5}$, we have $|f(A,B)||A||Y_k| \ll p^2$, and then either
$$E^{\times}_4 (B,C) \ll \frac{|f(A,B)|^3|C|^2}{|A|},$$
or
$$s_k \ll \frac{m^4}{k^4}\frac{|f(A,B)|^3|C|^2}{|A|}.$$
All above implies either
$$E^{\times}_4(B,C) \ll \text{min} \left\{ \frac{|f(A,B)|^3|C|^2}{|A|}, \frac{|f(A,B)|^2|C|^3}{|A|} \right\},$$
or
$$s_k \ll \frac{m^4}{k^4}.\text{min} \left\{ \frac{|f(A,B)|^3|C|^2}{|A|}, \frac{|f(A,B)|^2|C|^3}{|A|} \right\}.$$
Following after dyadic summation in $k$, we finally get 
$$E^{\times}_4(B,C):=\sum_{x}r_{B/C}^4(x) \ll \left( m^4 \cdot \min\left\{ \frac{|f(A,B)|^3|C|^2}{|A|}, \frac{|f(A,B)|^2|C|^3}{|A|}\right\}\right) \cdot \log|A|,$$
which completes the proof of Lemma \ref{lemmapro}.
\end{proof}

\section{Proof of Theorem \ref{maintheorem2}}

Let $P \subset B-C$ be a set of popular differences, defined as follows 
\[ P := \left\{ x \in B - C : r_{B-C}(x) \geq \frac{|B||C|}{2|B-C|} \right\}.\]  
We further obtain 
\begin{align} \label{conP1}
\left| \{ (b_1,c_1) \in B \times C: b_1-c_1 \in P\} \right| \gg |B||C|.
\end{align}
Consider the equation
\begin{align} \label{eq2}
b-c=(a-c)-(a-b)=(d-c)-(d-b).
\end{align} 
Suppose $x=a-c$ and $y=d-c$ are in $P$, while $u=a-b, v=d-b$ are both in $B-B$ (*). By the condition \eqref{conP1}, the equation \eqref{eq2} has $N \geq \frac{|P|^2}{|C|}.|B|\gg |B|^3.|C|$ solutions $(a,b,c,d)$.\\
We define an equivalent relation on $B \times B \times C \times B$ as follows:
$$(a,b,c,d) \sim (a',b',c',d') \Leftrightarrow (a,b,c,d)=(a'+t,b'+t,c'+t,d'+t),$$ 
for some $t \in (B-B) \cap (C-C)$, and we also denote the equivalent class of $(a,b,c,d)$ by $[a,b,c,d]$. 

Clearly, if $(a,b,c,d) \sim (a',b',c',d')$ and $(a,b,c,d)$ is a solution of equation $(\ref{eq2})$, then $(a',b',c',d')$ is also a solution. Thus, we can decompose $N$ into the sum over each equivalent class, which satisfies (*).\\ 
$$N = \sum_{[a,b,c,d]} r([a,b,c,d]),$$
in which $r([a,b,c,d])$ is number of elements in the $[a,b,c,d]$-equivalent class. 
Applying the Cauchy-Schwartz inequality, we obtain
\[ N^2 \leq X^{*}\cdot\left(\sum_{[a,b,c,d]} r^2([a,b,c,d])\right),\]
in which, $X^{*}$ is number of equivalent classes, which satisfying (*).  \\
Moreover, each equivalent class is defined uniquely by any three of five $x,y,u,v,w$, and each equivalent class provides a distinct solution of system
$$x,y \in P, u,v \in B-B, w \in B-C: x-u=y-v=w.$$
It implies that
$$X^{*} \leq \mathcal{X} := \left|\left\{ x,y \in P; u,v \in B-B: x-u=y-v\right\}\right|.$$ 
On the other hand, $(a,b,c,d) \sim  (a',b',c',d')$ if and only if there exists $t \in B-B \cap C-C$ such that $t=a-a'=b-b'=c-c'=d-d'$, in which $a-a',b-b',d-d' \in B-B$ and $c-c' \in C-C$. Hence,
\[ \sum_{[a,b,c,d]} r^2([a,b,c,d])\leq \sum_{x\in (B-B)\cap (C-C)}r^3_{B-B}(x).r_{C-C}(x) \leq (E_4(B))^{3/4}(E_4(C))^{1/4}. \]
The last inequality is obtained by Holder's inequality. 

If follows from all above, we get 
\begin{align} \label{difineq1}
|B|^3|C| \ll N \ll (E_4(B))^{3/8}(E_4(C))^{1/8}\sqrt{\mathcal{X}}.
\end{align}
To bound the quantity $\mathcal{X}$, we use popularity of the difference  and dyadic localization. Namely, for some $\Delta \geq 1$ and some $T \subset (B-(B-C))$ one has

\begin{align*}
\mathcal{X} &\ll \dfrac{|B-C|^2}{|B|^2|C|^2}\left| \left\{b_1,b_2 \in B, c_1,c_2 \in C; u,v \in B-B: b_1-(c_1-u)=b_2-(c_2-v)\in B-C\right\} \right| \\
&\lesssim \dfrac{|B-C|^2}{|B|^2|C|^2}\Delta^2 \left|\left\{b_1,b_2 \in B, d_1,d_2 \in T \subset (B-(B-C)): b_1-d_1=b_2-d_2\in B-C \right\}\right| \\ 
&\leq \dfrac{|B-C|^2}{|B|^2|C|^2}\Delta^2\sum_{ w \in B-C} r_{B-T}(w)^2 \\
&\leq \dfrac{|B-C|^2}{|B|^2|C|^2}\Delta^2|B-C|^{1/2}\left(\sum_{w}r_{B-T}(w)^4\right)^{1/2}\\
&=\dfrac{|B-C|^{5/2}}{|B|^2|C|^2}\Delta^2 \sqrt{E_4(B,T)},
\end{align*}
where the last inequality is an application of Cauchy-Schwartz inequality. Now, applying the Lemma \ref{lemmasum}, one has
\[ \mathcal{X} \lesssim m^2 \cdot \min\left\{ \dfrac{|B-C|^{5/2}}{|B|^2|C|^2}\Delta^2 \dfrac{|f_1(A,B)|^{3/2}|T|}{|A|^{1/2}}, \dfrac{|B-C|^{5/2}}{|B|^2|C|^2}\Delta^2\frac{|f_1(A,B)||T|^{3/2}}{|A|^{1/2}} \right\}.\]
Note that $|T|\Delta \ll |B||B-C|$ and $|T|\Delta^2\ll E^{+}(B,B-C).$ Therefore,
\[ \mathcal{X} \lesssim m^2 \frac{|B-C|^{5/2}}{|B|^2|C|^2} \cdot \min\left\{\frac{|f_1(A,B)|^{3/2}}{|A|^{1/2}}E^{+}(B,B-C), \dfrac{|f_1(A,B)|}{|A|^{1/2}}|B||B-C|(E^{+}(B,B-C))^{1/2}\right\}.\]

Using the Theorem \ref{theo4}, we can obtain the upper bound for $E^+(B,B-C).$ More precisely, we have   
\begin{align*}
|A|^2E^{+}(B,B-C) &= |A|^2 \left|\left\{b_1,b_2,d_1,d_2 \in B^2 \times (B-C)^2: b_1-d_1=b_2-d_2 \right\}\right| \\
&\leq \big|\big\{ (a_1,a_2,f_1(a_1,b_1),f_1(a_2,b_2),d_1,d_2) \in A^2 \times f_1(A,B)^2 \times (B-C)^2: \\
& \quad \quad \quad \frac{f_1(a_1,b_1)}{g_1(a_1)}-h_1(a_1)-d_1=\frac{f_1(a_2,b_2)}{g_1(a_2)}-h_1(a_2)-d_2\big\}\big| \\
&\ll m^2|f_1(A,B)|^{3/2}|A|^{3/2}|B-C|^{3/2}.
\end{align*}
It gives us a upper bound of $E^{+}(B,B-C)$ as follows: 
\[ E^{+}(B,B-C) \ll m^2|f_1(A,B)|^{3/2}|A|^{-1/2}|B-C|^{3/2}.\]
Putting all inequality above together, we have
\begin{align} \label{difineq2}
\mathcal{X} \lesssim \frac{m^4|B-C|^{4}|f_1(A,B)|^{3}}{|B|^2|C|^2|A|}.
\end{align}
By \eqref{difineq1},\eqref{difineq2}, and the remark of Lemma \ref{lemmasum}, we get
\begin{align*} 
|B|^6|C|^2 &\ll (E_4(B))^{3/4}(E_4(C))^{1/4}\dfrac{m^4|B-C|^{4}|f_1(A,B)|^{3}}{|B|^2|C|^2|A|} \\
&\lesssim m^4 \cdot \frac{|f_1(A,B)|^{3/2}|f_2(D,C)|^{1/2}|B|^{9/4}|C|^{3/4}}{|A|^{3/4}|D|^{1/4}} \cdot \frac{m^4|B-C|^{4}|f_1(A,B)|^{3}}{|B|^2|C|^2|A|}
\end{align*}
Hence
$$|B|^{23/4}|C|^{13/4}|A|^{7/4}|D|^{1/4} \lesssim 
m^8|B-C|^4|f_1(A,B)|^{9/2}|f_2(D,C)|^{1/2}.$$
It follows that 
$$ \max\left\{ \left|f_1(A,B)\right|, \left|f_2(D,C)\right|, |B-C|\right\}  \gtrsim 
\frac{|B|^{23/36}|C|^{13/36}|A|^{7/36}|D|^{1/36}}{m^{8/9}}.$$
We complete the proof of Theorem \ref{maintheorem2}. 
\section{Proof of Theorems \ref{maintheorem3}, \ref{maintheorem4}, and \ref{maintheorem5}}
\begin{proof}[\bf Proof of Theorem \ref{maintheorem3}]
Let $P$ be a set of popular sums, defined as follows
\[ P=P(C):= \left\{ x \in C+C: r_{C+C}(x) \geq \epsilon.\frac{|C|^2}{|C+C|} \right\}, \]
where $\epsilon = \log{(C)}^{-1}.$ It implies 
\[ \left|\left\{ (c,c') \in C \times C: c+c' \in P \right\}\right| \geq (1-\epsilon)|C|^2.\] 
Furthermore, let $C'=C'(C):= \left\{ c'\in C:|\{ c" \in C: c'+c" \in P(C)\}| \geq (1-\epsilon)|C| \right\},$ so $|C'| \geq (1-\epsilon)|C|.$

Let $P' \subset C'-C'$ be popular by energy $E_{4/3}(C')$. Namely $x \in P'$ if for some $\Delta' \geq 1, \Delta'\leq r_{C'-C'}(x) \leq 2\Delta'$, and then
$$E_{4/3}(C') \gtrsim |P'|.\Delta'^{4/3}.$$
On the other hand, we also have $E_{4/3}(C')\gg E_{4/3}(C)$ (Lemma 8, \cite{RuSh1}), that is be used in the end of the proof.

Now, for $(b,c) \in C \times C, (a,d) \in B \times B$, consider the following equation
\begin{align} \label{sumeq1}
-c+b=(a+b)-(a+c)=(d+b)-(d+c).
\end{align}
Similar to the proof of Theorem \ref{maintheorem2}, let us make the popularity assumption as to the variables $a,b,c,d$. By the definition of the sets $C'$ and $P'$, it follows that the number of solutions $\phi$ of the equation ($\ref{sumeq1}$), when the different $b-c \in P'$ and all the four sums $x:=a+b, y:=a+c, u:=d+c,v:=d+b \in P$ is bounded from below as 
$$\phi \geq (1-4\epsilon)|P'| \Delta' |B|^2.$$
The equation \eqref{sumeq1} is invariant to a simultaneous shift of $b,c$ by $t$ and $d,a$ simultaneously by $-t$. We say $(a,b,c,d)$ is equivalent to $(a',b',c',d')$ if 
\begin{align*}
(a,b,c,d)=(a',b',c',d')+(t,-t,t,-t) \Leftrightarrow t=a-a'=b'-b=c-c'=d'-d,
\end{align*}
for some $t \in B-B \cap C-C$. \\
Each equivalent class $[a,b,c,d]$ yields a different solutions of the system of equations
\[ x,y,u,v \in P, w\in P': x-y=v-u=w. \]
Therefore, similar to the proof of Theorem \ref{maintheorem2}, by the Cauchy-Schwarz inequality, we get 

\begin{align*}
|B|^2|P'|\Delta' &\lesssim \left(\sum_{x\in B-B \,\cap\, C-C} r^2_{B-B}(x)r^2_{C-C}(x)\right)^{1/2} \sqrt{\left|\left\{x,y,u,v \in P, w \in P': x-y=v-u=w\right\}\right|} \\
&\ll E_4(B)^{1/4} E_4(C)^{1/4} \dfrac{|B+C|^2}{|B|^2|C|^2} \sqrt{\left| \mathcal{Y}\right|}.
\end{align*}
where $\mathcal{Y} = \{ (b_1,b_2,b_3,b_4 \in B, c_1,c_2,c_3,c_4 \in C: b_1+c_1-b_2-c_2=b_3+c_3-b_4-c_4 \in P' \}.$

There exist a popular subset $T \in B+C-C$ where $\forall d \in T, r_{B+C-C}(d) \approx \Delta$, for some $\Delta \geq 1$, such that one gets
\[ |B|^2\Delta'|P'| \lesssim (E_4(B))^{1/4} (E_4(C))^{1/4} \frac{|B+C|^2}{|B|^2|C|^2}|P'|^{1/4}\Delta.E_4(B,T)^{1/4}.\]
Applying Lemma \ref{lemmasum}, we obtain 
\[ |B|^2\Delta'|P'|^{3/4} \lesssim  m (E_4(B))^{1/4}(E_4(C))^{1/4}\frac{|B+C|^2}{|B|^2|C|^2} \frac{|f_1(A,B)|^{3/4}}{|A|^{1/4}}\left(|T|\Delta^2\right)^{1/2}.\]
On the other hand, we have 
\begin{align*} 
|T|\Delta^2 &\leq \left|\left\{(b_1,b_1') \in B, (c_1,c_2,c_1',c_2')\in C: b_1+c_1-c_2=b_1'+c_1'-c_2'\right\}\right|\\
&\ll \Delta_1^2 \left|\left\{(b_1,b_1')\in B, x_1,x_1' \in T_1: b_1+x_1=b_1'+x_1'\right\}\right|,
\end{align*}
where $T_1 \subset C-C$, with $r_{C-C}(x) \approx \Delta_1, \Delta \geq 1$. Again, applying Lemma \ref{lemmasum}, one gets
\[ \left(|T| \Delta^2 \right)^{1/2} \lesssim m \frac{|f_1(A,B)|^{3/4}|T_1|^{3/4}}{|A|^{1/4}} \Delta_1 \leq m \frac{|f_1(A,B)|^{3/4}}{|A|^{1/4}}(E_{4/3}(C))^{3/2}.\]
Collecting all inequalities above, we get 
\[ |B|^2(E_{4/3}(C'))^{3/4} \ll |B|^2 \Delta' |P'|^{3/4} \lesssim m^2(E_4(B))^{1/4} (E_4(C))^{1/4} \frac{|B+C|^2}{|B|^2|C|^2}\frac{|f_1(A,B)|^{3/2}}{|A|^{1/2}}\left(E_{4/3}(C)\right)^{3/4}.\]
Since $E_{4/3}(C) \ll E_{4/3}(C')$, we can cancel $(E_{4/3}(C))^{3/4}$ , and then we conclude that
\[ |B|^2 \lesssim m^2 (E_4(B))^{1/4} (E_4(C))^{1/4} \frac{|B+C|^2}{|B|^2|C|^2}\frac{|f_1(A,B)|^{3/2}}{|A|^{1/2}}.\]
Recall that
$$E_4(B) \lesssim m^4 \cdot \frac{|f_1(A,B)|^2|B|^3}{|A|},$$
$$E_4(C) \lesssim m^4 \cdot \frac{|f_2(D,C)|^2|C|^3}{|D|}.$$
Therefore,
\[ |B|^2 \lesssim 
m^4.\frac{|f_1(A,B)|^{1/2}|B|^{3/4}|f_2(D,C)|^{1/2}|C|^{3/4}}{|A|^{1/4}|D|^{1/4}}\frac{|B+C|^2}{|B|^2|C|^2}\frac{|f_1(A,B)|^{3/2}}{|A|^{1/2}}.
\] 
Hence,
\[ |B|^{13/4}|C|^{5/4}|A|^{3/4}|D|^{1/4} \lesssim m^4|B+C|^2|f_1(A,B)|^2|f_2(D,C)|^{1/2}.\]
It follows that
$$ \max\left\{|f_1(A,B)|,|f_2(D,C)|,|B+C|\right\} \gtrsim \frac{|B|^{13/18}|C|^{5/18}|A|^{1/6}|D|^{1/18}}{m^{8/9}}.$$
We complete the proof for Theorem \ref{maintheorem3}.  
\end{proof}

\begin{proof}[\bf Proof of theorem \ref{maintheorem4}]
Following the proof of Theorem \ref{maintheorem3}, we replace the sum (difference) operation on $B,C$ by the product (res. quotient) operation on $B,C$. Then applying the Lemma \ref{lemmapro} instead of Lemma \ref{lemmasum}, we obtain the Theorem \ref{maintheorem4}.
\end{proof}

\begin{proof}[\bf Proof of theorem \ref{maintheorem5}] 
By Corollaries \ref{cor1} and \ref{cor4} when $A \ll p^{3/5}$, we get
\[  |A|^{11/2} \lesssim |f(A,A)|^{5/2}|A+A|^{2},\]
\[  |A|^{11/2} \lesssim |f(A,A)|^{5/2}|A \cdot A|^{2}.\]
These inequalities imply
\[ |A|^{11} \lesssim |f(A,A)|^{5} \left( \min\left\{|A+A|,|A \cdot A|\right\}\right)^4. \]
Therefore, if $\min\left\{|A+A|,|A \cdot A| \right\} \leq |A|^{9/8-\epsilon},$ one gets $|A|^{11-9/2+4\epsilon} \lesssim |f(A,A)|^5.$ It follows that
$$|A|^{\frac{13}{10}+\frac{4\epsilon}{5}}\lesssim |f(A,A)|.$$
We complete the proof of theorem \ref{maintheorem5}.
\end{proof}

\end{document}